
\documentclass[a4paper,10pt]{amsart}
\usepackage{amsmath,amsthm,amssymb,amsfonts,enumerate,color,bbm}

\oddsidemargin = 9pt \evensidemargin = 9pt \textwidth = 440pt


\newcommand{\R}{\mathbb{R}}
\newcommand{\N}{\mathbb{N}}
\renewcommand{\a}{\alpha}

\newcommand{\s}{\sigma}

\newcommand{\proj}{\operatorname{Proj}}

\newtheorem{thm}{Theorem}[section]
\newtheorem{prop}[thm]{Proposition}

\newtheorem{lem}[thm]{Lemma}

\theoremstyle{definition}

\newtheorem{rem}[thm]{Remark}

\allowdisplaybreaks

\numberwithin{equation}{section}

\author[\'O. Ciaurri]{\'Oscar Ciaurri}
\address[\'O. Ciaurri]{Departamento de Matem\'aticas y Computaci\'on\\
         Universidad de La Rioja\\
         26006 Logro\~no, Spain}
\email{oscar.ciaurri@unirioja.es}

\author[L. Roncal]{Luz Roncal}
\address[L. Roncal]{Departamento de Matem\'aticas y Computaci\'on\\
         Universidad de La Rioja\\
         26006 Logro\~no, Spain.}
\curraddr{BCAM - Basque Center for Applied Mathematics\\
         48009 Bilbao, Spain}
\email{lroncal@bcamath.org}

\thanks{The research of both authors is supported by grant MTM2015-65888-C04-4-P from Spanish Government. The second author is also supported by Basque Government through the BERC 2014-2017 program and by Spanish Ministry of Economy and Competitiveness MINECO: BCAM Severo Ochoa excellence accreditation SEV-2013-0323}

\keywords{Fractional integral, Laguerre expansions, vector-valued
inequalities, weighted inequality, mixed-norm spaces}

\subjclass[2010]{Primary: 42C10. Secondary: 47G40, 26A33, 42B35,
33C45}

\begin{document}

\title[Fractional integrals for Laguerre expansions]{Vector-valued extensions \\
for fractional integrals of Laguerre expansions}

\begin{abstract}
We prove some vector-valued inequalities for fractional integrals defined in several contexts of orthonormal systems of Laguerre functions. On one hand, we obtain weighted $L^p-L^q$ vector-valued extensions, in a multidimensional setting, for the negative powers of the operator related to the so-called Laguerre expansions of Hermite type. On the other hand, we give necessary and sufficient conditions for vector-valued $L^p-L^q$ estimates related to the negative powers of the Laguerre operator associated to expansions of convolution type, in a one dimensional setting. Both types of vector-valued inequalities are based on estimates of the kernel with precise control of the parameters involved. As an application, mixed norm estimates for the fractional integrals related to the harmonic oscillator are deduced.
\end{abstract}

\maketitle

\section{Introduction}
The aim of this paper is the extension of $L^p-L^q$ mapping
properties concerning fractional integrals (or negative powers, or potential operators)
related to two second order differential operators of Laguerre type.
More specifically, our target will be the proof of vector-valued extensions
of some results given in \cite{NoSt}. We will deal with
vector-valued inequalities of the form
\[
\Big\|\Big(\sum_{j=0}^\infty |T_j
f_j|^s\Big)^{1/s}\Big\|_{L^q(X,d\mu)} \le C
\Big\|\Big(\sum_{j=0}^\infty
|f_j|^s\Big)^{1/s}\Big\|_{L^p(X,d\mu)},
\]
where $\{T_j\}_{j\ge 0}$ is a sequence of operators defined on a
measure space $(X,d\mu)$. We will also consider
weighted vector-valued inequalities.

We first contextualize our problem. Let $I_{\sigma}$ be the fractional integral in the Euclidean space. The classical Hardy-Littlewood-Sobolev inequality (see, e.g.,
\cite{Duo,St-Sing}) establishes that
\[
\|I_\s f\|_{L^q(\R^n,dx)}\le C \|f\|_{L^p(\R^n,dx)},
\]
when $1<p<\frac{n}{\s}$ and
$\frac{1}{q}=\frac{1}{p}-\frac{\s}{n}$. Moreover, in the case
$(p,q)=\big(1,\frac{n}{n-\s}\big)$ a weak type inequality holds. A
weighted version of the Hardy-Littlewood-Sobolev inequality was given in
\cite{St-We}.

The literature concerning the study of analogues of $I_{\sigma}$ is very vast. We are going to refer only a small part of the works dealing with fractional integrals associated with orthogonal expansions, and in particular Hermite and Laguerre expansions. Namely, B. Bongioanni and J. L. Torrea
\cite{BoTo} obtained estimates for the negative powers of the
multidimensional harmonic oscillator. Later, a most complete study of potential operators in the latter setting was developed in the work by A. Nowak and K. Stempak \cite{NoSt-3}, where qualitatively sharp estimates for the potential kernel are obtained and $L^p-L^q$ boundedness of the potential operators is characterized. The boundedness from $L^p$ into itself of fractional integrals for Laguerre systems in dimension one was analyzed in \cite{BoTo2}, see also \cite{Garrigos}. Fractional integrals for the
multidimensional Laguerre expansions have been
treated in \cite{NoSt}. There, the authors analyzed
$L^p-L^q$ estimates (with and without weights) for the expansions
related to Laguerre functions of Hermite type and Laguerre
functions of convolution type (this nomenclature is used by S. Thangavelu in \cite{Than}). Sharp bounds for the one-dimensional kernel of potential operators in the case
of the Laguerre functions of convolution type have been obtained in \cite{NoSt-2}. Concerning recent results in other settings: for instance, a complete and exhaustive study of
fractional integrals for Jacobi and Fourier-Bessel expansions has
been developed in \cite{NowRon}; moreover, a
vector-valued extension in the Jacobi case was done in
\cite{CRS}. Finally, in \cite{NoSt-4}, sharp $L^p-L^q$ results for potential operators are obtained in the framework of the twisted Laplacian and so-called special Hermite expansions.

If we denote, respectively, by $(L^{H}_\a)^{-\sigma}$ and
$(L_{\alpha})^{-\sigma}$ the fractional operators for Laguerre
expansions of Hermite type and for Laguerre expansions of
convolution type (see Section \ref{Section:Main results} for
definitions), we are interested in the analysis of vector-valued
inequalities for the sequences of operators
$\{(L^{H}_{\a+aj})^{-\sigma}\}_{j\ge 0}$ and
$\{(L_{\alpha+aj})^{-\sigma}\}_{j\ge 0}$, where $a$ is a positive
real parameter. The meaning of $\a+aj$ will be explained below.

For Laguerre expansions of Hermite type we will consider the space
$(\mathbb{R}_+^n,dx)$, with $dx$ the Lebesgue measure and $\R^n_+=(0,\infty)^n$, $n\ge1$, and our
approach will be similar to \cite{NoSt}. But in our case, we will
prove an estimate for the kernels of the operators
$(L_{\alpha+aj}^{ H})^{-\sigma}$ independent of the parameter $j$.
Then a general result of vector-valued extensions will do the
work. We will also include some potential weights.

On the other
hand, we will deal with fractional integrals related to Laguerre
expansions of convolution type, where the space considered will be
$((0,\infty),d\mu_\alpha)$ with $d\mu_\alpha(r)=r^{2\alpha+1}\,
dr$. In this case, we provide necessary and sufficient conditions for the vector-valued extensions. However, the way to prove the corresponding vector-valued
inequality will be close to the ideas in \cite{BoTo}. Observe that
in \cite{BoTo}, the authors consider Hermite expansions. The
argument given in \cite{NoSt} to treat the convolution type
setting is based on a very useful convexity principle. Unfortunately, we cannot apply the convexity principle here. The reason is the following: the constants appearing in
this case involve Gamma functions whose $\log$-convexity makes
these constants increase with no control at all. The new estimates given in \cite{NoSt-2} for $(L_\a)^{-\s}$ are not appropriate for our target either: these estimates, applied to $(L_{\alpha+aj})^{-\sigma}$, yield constants depending on the parameter $j$ and in that case we could not deduce the vector-valued extension.

Even more, the potential kernel of convolution type is dominated by the potential kernel of Hermite type (see \eqref{eq:relacion potential kernels}). This could be used to get $L^p-L^q$ estimates from the ones obtained in the Hermite case, but it happens that an additional condition on the parameters is needed, see the detailed explanation in Section \ref{Section:Laguerre convolution}. To avoid that restriction, suitable estimates with control on the parameters involved have to be proved.

At this point we notice that, unlike the case of the Laguerre expansions of Hermite type, the case of Laguerre expansions of convolution type is developed here in one dimension. For this case, the technical complexity to get the results is already high even in one dimension. The analysis in higher dimensions is more involved and we have not carried it out. Moreover, the important contribution in our work consists of a careful and thorough tracking of the constants involved that is noteworthy by itself, even in one dimension.

Well-known transference arguments exposed in \cite{Abu} could easily imply analogous results to Theorem~\ref{th:LpLq Lag Hermite} and Theorem~\ref{th:LpLq Lag convolution dim1} for other Laguerre families. One may think that our weighted results in the multidimensional Hermite type setting (Theorem~\ref{th:LpLq Lag Hermite}) imply the results in the convolution type setting (Theorem~\ref{th:LpLq Lag convolution dim1}) by these transference arguments among Laguerre families, or reciprocally. Nevertheless, unlike the multidimensional Hermite type context, in the one dimensional convolution type context we provide necessary and sufficient conditions for the vector-valued extensions. It happens that the range of the parameter $q$ depending on $p$ in Theorem \ref{th:LpLq Lag Hermite} (the multidimensional result) is not optimal. The reason is that, in the proof, we are estimating the corresponding kernel by a convolution kernel and a theorem by L. H\"ormander (\cite[Theorem 1.1]{Hormander}) states that nontrivial convolution operators cannot be bounded from $L^p(\R^n)$ to $L^q(\R^n)$ for $p>q$. See also the discussion in \cite[p. 190]{NoSt}. Actually, an application of the transference arguments of \cite{Abu} to the one dimensional unweighted version of Theorem \ref{th:LpLq Lag Hermite} does not yield Theorem \ref{th:LpLq Lag convolution dim1}, in which the conditions on the parameters involved are proved to be sharp. Conversely, the multidimensional result in Theorem \ref{th:LpLq Lag Hermite} cannot be obtained either by transference from Theorem \ref{th:LpLq Lag convolution dim1} because the latter is proved only in one dimension. All in all, both Theorems~\ref{th:LpLq Lag Hermite} and Theorem~\ref{th:LpLq Lag convolution dim1} are of different nature and one does not imply the other by transference.

The proofs of the main results are based on the idea of representing the modified Bessel function through an integral, in which the integrated expression is symmetrized by means of a suitable change of variable. This idea, used by E. Sasso (see e.g. \cite{Sasso}), is presented with full detail, for instance, in \cite{NoSt-0}.

As an application of our result on Laguerre expansions of
convolution type, we will analyze the fractional integral operator
related to spherical eigenfunctions of the harmonic oscillator.
Observe that the result in \cite{BoTo} deals with
eigenfunctions of the harmonic oscillator in cartesian
coordinates. In our situation we consider the eigenfunctions
obtained by using spherical coordinates (this is the reason for
the name \textit{spherical eigenfunctions}). In the spherical
case, the eigenfunctions are products of Laguerre functions and
spherical harmonics. We consider the mixed norm spaces
$L^{p,2}(\R^n,r^{n-1}\,dr\,d\omega)$ (see Section
\ref{section:application} for definition). These spaces were first sistematically studied by A. Benedek and R. Panzone in \cite{Ben-Pan}. They arise frequently in harmonic analysis when the spherical harmonics are
involved, see \cite{Rubio,Cor,CarRomSor,BalCor,CR}.

The organization of the paper is the following. In Section
\ref{Section:Main results} we introduce some definitions related
to Laguerre systems and establish our results about boundedness of
the associated fractional integrals. Section
\ref{section:application} contains our analysis of the fractional
integrals for the harmonic oscillator in mixed norm spaces. In
Section \ref{Section:Laguerre Hermite} and Section
\ref{Section:Laguerre convolution} we give the proofs of the
results given in Section \ref{Section:Main results}. Finally, in
Section \ref{section: technical} we show the proofs of some
technical results used along the paper.

\section{Definitions and main results}\label{Section:Main results}

We will first introduce some well-known facts concerning Laguerre functions.

Let $\a_i>-1$, $i=1,\ldots,n$ and $x=(x_1,\ldots,x_n)\in \R_+^n$. We define the first type of Laguerre functions $\varphi_k^{\a}(x)$ by
$$
\varphi_k^{\a}(x)=\varphi_{k_1}^{\a_1}(x_1)\cdots\varphi_{k_n}^{\a_n}(x_n),
$$
where $\varphi_{k_i}^{\a_i}(x_i)$ are the one dimensional Laguerre functions
$$
\varphi_{k_i}^{\a_i}(x_i)=\Big(\frac{2\Gamma(k_i+1)}{\Gamma(k_i+\a_i+1)}\Big)^{1/2}
L_{k_i}^{\a_i}(x_i^2)x_i^{\a_i+1/2}e^{-x_i^2/2}, \quad x_i>0, \quad i=1,\ldots,n
$$
and $L_{k_i}^{\a_i}$ denotes the Laguerre polynomial of degree
$k_i\in \N$ and order $\a_i>-1$, see \cite[p. 76]{Lebedev}. Moreover, the system $\{\varphi_k^{\a}\}_{k\in \N^n}$ is an orthonormal basis of $L^2(\R^n_+,dx)$. We
will deal with the differential operator given by
\begin{equation}\label{eq:differential operator Hermite}
L_{\a}^H=-\Delta+|x|^2+\sum_{i=1}^{n}\frac{1}{x_i^2}\Big(\a_i^2-\frac14\Big).
\end{equation}
The operator
$L_{\a}^H$ is symmetric and positive in $L^2(\R_+^n,dx)$ and the
Laguerre functions $\varphi_k^{\a}(x)$ are eigenfunctions of
\eqref{eq:differential operator Hermite}. Indeed, we have
$L_{\a}^H\varphi_k^{\a}=(4|k|+2|\a|+2n)\varphi_k^{\a}$. By $|\a|$
and $|k|$ we denote $|\a|=\a_1+\cdots+\a_n$ (thus $|\a|$ may be
negative) and the length $|k|=k_1+\cdots+k_n$. We will refer to
$\varphi_k^\a$ as \textit{Laguerre functions of Hermite type}.

For each $\sigma>0$, the fractional integrals for expansions in
Laguerre functions of Hermite type are given by
\[
(L_{\a}^H)^{-\s}f=\sum_{m=0}^\infty (4m+2|\a|+2n)^{-\s}P_mf
\]
where
\[
P_m f=\sum_{|k|=m}a^\a_k(f)\varphi_k^{\a}, \qquad
a^\a_k(f)=\int_{\mathbb{R}_+^{n}}f(x)\varphi_k^\a(x) \, dx.
\]
With these notations, our result related to the fractional
integrals for the Laguerre expansions of Hermite type is the
following.
\begin{thm}\label{th:LpLq Lag Hermite}
Let $\a\in[-1/2,\infty)^n$. Let $a\ge 1$, $\s>0$, $1<p\le
q<\infty$, $1\le r \le \infty$, $t<n/p'$, $s<n/q$, $t+s\ge0$.
\begin{enumerate}[(i)]
\item If $\s\ge n/2$, then there exists a constant depending only
on $\s$ and $\a$ such that
$$
\Big\|\Big(\sum_{j=0}^\infty|(L_{\a+aj}^{H})^{-\s}(f_j)|^r\Big)^{1/r}
\Big\|_{L^q(\R_+^n, |x|^{-sq}dx)}\le
C\Big\|\Big(\sum_{j=0}^\infty|f_j|^r\Big)^{1/r}\Big\|_{L^p(\R_+^n,
|x|^{tp}dx)}
$$
for all $f_j\in L^p(\R_+^n,|x|^{tp}dx)$.

\item If $\s<n/2$, then the same boundedness holds under the
additional condition
\begin{equation*}
\frac1q\ge\frac1p-\frac{2\s-t-s}{n}.
\end{equation*}
\end{enumerate}
\end{thm}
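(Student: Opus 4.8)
The plan is to reduce the vector-valued inequality to a single scalar weighted estimate for the kernel of $(L^H_{\a+aj})^{-\s}$ that is \emph{uniform in $j$}, and then invoke a general vector-valued extension principle. First I would recall from the definition that
\[
(L_{\a}^H)^{-\s}f(x)=\int_{\R_+^n}K^{\a}_\s(x,y)f(y)\,dy,
\]
where $K^\a_\s(x,y)=\sum_{m\ge0}(4m+2|\a|+2n)^{-\s}\sum_{|k|=m}\v_k^\a(x)\v_k^\a(y)$, and use the subordination formula $\lambda^{-\s}=\frac1{\Gamma(\s)}\int_0^\infty t^{\s-1}e^{-t\lambda}\,dt$ together with the Mehler–type formula for the Laguerre heat kernel $\sum_k e^{-4tm}\v_k^\a(x)\v_k^\a(y)$ (the kernel of $e^{-tL_\a^H}$) to obtain an explicit integral representation of $K^\a_\s$. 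The goal of this first step is to extract a pointwise bound
\[
0\le K^{\a+aj}_\s(x,y)\le C\,G_\s(x,y)
\]
with $C$ and the kernel $G_\s$ \emph{independent of $j$} (and of $a\ge1$); replacing $\a$ by $\a+aj$ only increases the order parameters, and the heat kernel bounds should be monotone in $\a$ in the relevant region $\a\in[-1/2,\infty)^n$, so this monotonicity is what makes the uniformity possible. This mirrors the strategy announced in the introduction (``we will prove an estimate for the kernels of $(L^H_{\a+aj})^{-\s}$ independent of $j$, then a general result of vector-valued extensions will do the work'').

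Next I would prove the scalar weighted bound: for $\a\in[-1/2,\infty)^n$ fixed, $1<p\le q<\infty$, $t<n/p'$, $s<n/q$, $t+s\ge0$, and (in case $\s<n/2$) the gap condition $1/q\ge 1/p-(2\s-t-s)/n$,
\[
\big\||x|^{-s}(L_\a^H)^{-\s}f\big\|_{L^q(\R_+^n,dx)}\le C\,\big\||x|^{t}f\big\|_{L^p(\R_+^n,dx)}.
\]
For this I would split $G_\s(x,y)$ into a ``local'' region (where $|x|\sim|y|$, or where the Gaussian factor is $\gtrsim1$) and a ``global'' region. On the local part the kernel behaves like the Euclidean Riesz potential $|x-y|^{-(n-2\s)}$, so the bound follows from the weighted Hardy–Littlewood–Sobolev inequality of Stein–Weiss \cite{St-We} (this is where the condition $1/q\ge1/p-(2\s-t-s)/n$, i.e. homogeneity, and the weight restrictions $t<n/p'$, $s<n/q$ enter); when $\s\ge n/2$ the exponent $n-2\s\le0$ and the local kernel is bounded, so only the global part carries the analysis. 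On the global part the exponential decay of the heat kernel makes $G_\s$ integrable against the relevant powers, and one reduces to one-dimensional Hardy-type operators on $\R_+$; here Young's inequality or Schur's test with the power weights, valid exactly under $t<n/p'$ and $s<n/q$, closes the estimate. The estimates from \cite{NoSt} for $(L^H_\a)^{-\s}$ give precisely this scalar conclusion, so really the point is to re-derive them tracking that the constant depends only on $\s,\a$ and not on $j$.

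Finally I would pass from scalar to vector-valued: since each $(L^H_{\a+aj})^{-\s}$ has a nonnegative kernel dominated by $C\,G_\s(x,y)$ uniformly in $j$, for any fixed $x$ and any $1\le r\le\infty$ Minkowski's integral inequality (or the triangle inequality in $\ell^r$) gives the pointwise bound
\[
\Big(\sum_{j}\big|(L^H_{\a+aj})^{-\s}f_j(x)\big|^r\Big)^{1/r}\le C\int_{\R_+^n}G_\s(x,y)\Big(\sum_j|f_j(y)|^r\Big)^{1/r}\,dy,
\]
so the left-hand side is controlled by the scalar operator with kernel $G_\s$ applied to the scalar function $F(y)=\big(\sum_j|f_j(y)|^r\big)^{1/r}$; applying the scalar weighted inequality to $F$ yields the theorem. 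I expect the main obstacle to be the first step: obtaining the kernel bound $K^{\a+aj}_\s\le C\,G_\s$ with a constant genuinely free of $j$, because the natural estimates carry $\Gamma$-factors (ratios like $\Gamma(m+1)/\Gamma(m+|\a|+aj+1)$ and normalizing constants of the $\v_k^{\a+aj}$) that degenerate as $j\to\infty$ — this is exactly the difficulty flagged in the introduction in the convolution-type case. The resolution should be that in the Hermite-type normalization these $\Gamma$-factors cancel against the heat-kernel weights, leaving a bound governed only by the translation-invariant Riesz-potential profile plus an exponentially decaying tail, both insensitive to shifting $\a$ by $aj$; making that cancellation explicit (via the generating function for Laguerre polynomials and careful but routine estimates on Bessel-type special functions that appear, uniform in the order) is the technical heart of the argument.
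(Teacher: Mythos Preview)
Your plan is correct and follows essentially the same route as the paper: obtain a pointwise kernel bound $\mathcal{H}^H_{\a+aj,\s}(x,y)\le C_\s K_\s(x-y)$ uniform in $j$ via the heat-kernel representation and Schl\"afli's integral for $I_\nu$ (the $\Gamma$-cancellation you anticipate is exactly Proposition~\ref{th:acotacion nucleo}), invoke the scalar weighted estimate for convolution with $K_\s$ from \cite[Theorem 2.5]{NoSt}, and pass to $\ell^r$ by positivity. Your Minkowski argument for the last step is precisely the content of Lemma~\ref{Lem:Marcin}, so there is no real divergence from the paper's proof.
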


\begin{rem}
In the previous theorem, the sequence $\a+aj$ has to be
understood as $(\a_1+aj,\dots,\a_n+aj)$. It will be clear after
the proof that this sequence can be changed into
$(\a_1+a_1(j),\dots,\a_n+a_n(j))$ where $\{a_i(j)\}_{j\ge 0}$, for
$i=1,\dots,n$, are positive, increasing and unbounded sequences such that $a_i(0)=0$ and $a_i(1)\ge 1$.
\end{rem}

Let us focus on the second setting. As explained in the introduction, we will consider only the one-dimensional context. Let $\a\ge -1/2$. We define another type of Laguerre functions $\psi_k^{\a}$ by (observe the change of notation: we use now $r$ for the one-dimensional variable)
$$
\psi_{k}^{\a}(r)=\Big(\frac{2\Gamma(k+1)}{\Gamma(k+\a+1)}\Big)^{1/2}
L_{k}^{\a}(r^2)e^{-r^2/2}, \quad r>0.
$$
These functions are related to the differential operator given by
\begin{equation}\label{eq:differential operator}
L_{\a}=-\frac{d^2}{dr^2}+r^2-\frac{2\a+1}{r}\frac{d}{d
r},
\end{equation}
which is a symmetric operator on $(0,\infty)$ equipped with the measure
$$
d\mu_{\a}(r)=r^{2\a+1}dr.
$$
The functions $\psi_k^{\a}$ form an orthonormal basis of $L^2((0,\infty),d\mu_{\a})$ and are eigenfunctions of the differential
operator \eqref{eq:differential operator}. Indeed, we have
$L_{\a}\psi_k^{\a}=(4k+2\a+2)\psi_k^{\a}$. We will refer to
the functions $\psi_k^\a$ as \textit{Laguerre functions of
convolution type}.

For the expansions of Laguerre functions of convolution type we
define the fractional integrals as
\[
(L_{\a})^{-\s}f=\sum_{m=0}^\infty
(4m+2\a+2)^{-\s}\mathcal{P}_mf
\]
where $\s>0$ and
\[
\mathcal{P}_m f=b^\a_m(f)\psi_m^{\a}, \qquad
b^\a_m(f)=\int_{0}^{\infty}f(r)\psi_m^\a(r) \, d\mu_\a(r).
\]
Our result in this case is the following.
\begin{thm}\label{th:LpLq Lag convolution dim1}
Let $0<\s<\a+1$, $\a\ge-1/2$, $a\ge 1$ and $1\le p,q,s\le \infty$.
Define $u_j(r)=r^{aj}$, $r\in(0,\infty)$, $j=0,1,\ldots$.
Then there exists a constant depending only on $\s$ and $\a$ such
that
\begin{equation}
\label{eq:vectorconvo}
\Big\|\Big(\sum_{j=0}^\infty|u_j(L_{\a+aj})^{-\s}(u_j^{-1}f_j)|^s\Big)^{1/s}
\Big\|_{L^q((0,\infty),
d\mu_{\a})} \le
C\Big\|\Big(\sum_{j=0}^\infty|f_j|^s\Big)^{1/s}\Big\|_{L^p((0,\infty),
d\mu_{\a})}
\end{equation}
for all $f_j\in L^p((0,\infty),d\mu_{\a})$, if and only if $p$ and $q$ satisfy
\begin{equation}
\label{eq:cond22}
\frac1p-\frac{\s}{\a+1}\le \frac{1}{q}<\frac1p+\frac{\s}{\a+1},
\end{equation}
with exclusion of the cases $p=1$ and $q=\frac{\a+1}{\a+1-\s}$, and $p=\frac{\a+1}{\s}$ and $q=\infty$.
\end{thm}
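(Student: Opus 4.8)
The proof splits into the sufficiency of \eqref{eq:cond22} and its necessity; the sufficiency is the substantial half, and the argument follows the scheme of \cite{BoTo} rather than the convexity principle of \cite{NoSt}. For \emph{sufficiency}, the idea is to realize each operator $u_j(L_{\a+aj})^{-\s}u_j^{-1}$ as integration against a kernel on $(\R_+,d\mu_\a)$ which is dominated, \emph{uniformly in} $j$, by one fixed positive kernel; then \eqref{eq:vectorconvo} follows from the corresponding scalar $L^p$--$L^q$ estimate for that kernel by Minkowski's integral inequality in $\ell^r$, $1\le r\le\infty$. To compute the kernels, write $(L_{\a+aj})^{-\s}=\Gamma(\s)^{-1}\int_0^\infty t^{\s-1}e^{-tL_{\a+aj}}\,dt$ (legitimate since the least eigenvalue of $L_\b$ is $2\b+2\ge1$ for $\b=\a+aj\ge\a\ge-1/2$) and invoke the Hille--Hardy formula, which gives for the heat kernel of $L_\b$ with respect to $d\mu_\b$
\[
G^\b_t(x,y)=\frac{2r}{1-r^2}\,(xy)^{-\b}\exp\!\Big(-\frac{(x^2+y^2)(1+r^2)}{2(1-r^2)}\Big)I_\b\!\Big(\frac{2xyr}{1-r^2}\Big),\qquad r=e^{-2t},
\]
with $I_\b$ the modified Bessel function. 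Since $d\mu_{\a+aj}=u_j^2\,d\mu_\a$ and $u_j(x)u_j(y)(xy)^{-\a-aj}=(xy)^{-\a}$, the operator $u_j(L_{\a+aj})^{-\s}u_j^{-1}$ acts on $L^p(d\mu_\a)$ by integration against
\[
\widetilde K_j(x,y)=\frac{1}{\Gamma(\s)}\int_0^\infty t^{\s-1}\,\frac{2r}{1-r^2}\,(xy)^{-\a}\exp\!\Big(-\frac{(x^2+y^2)(1+r^2)}{2(1-r^2)}\Big)I_{\a+aj}\!\Big(\frac{2xyr}{1-r^2}\Big)\,dt ,
\]
and for $j=0$ this is exactly the kernel $K^\a_\s$ of $(L_\a)^{-\s}$. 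The crucial point is that the weight $u_j$ precisely cancels the $j$-dependent power $(xy)^{-aj}$ coming from the Mehler kernel, so that the only trace of $j$ left in $\widetilde K_j$ is the \emph{order} of the Bessel factor $I_{\a+aj}$.

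The heart of the matter is then the following uniform kernel estimate: there is $C=C(\a)$ with $0\le\widetilde K_j(x,y)\le C\,K^\a_\s(x,y)$ for all $j\ge0$ and all $x,y>0$. The case $j=0$ is trivial. For $j\ge1$ we have $\a+aj\ge\a+1\ge1/2$ (this is where $a\ge1$ and $\a\ge-1/2$ are used), so, since $\nu\mapsto I_\nu(z)$ is decreasing on $[1/2,\infty)$ for each fixed $z>0$, $I_{\a+aj}(z)\le I_{\a+1}(z)$; and $I_{\a+1}(z)\le C(\a)\,I_\a(z)$ for every $z>0$ because the ratio $I_{\a+1}(z)/I_\a(z)$ is positive and continuous with finite limits ($0$ as $z\to0^+$, $1$ as $z\to\infty$). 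Substituting this into the integral defining $\widetilde K_j$ gives the estimate, uniformly in $j$. Obtaining $j$-independent control of the Bessel factor and checking that it produces no extra singularity is what I expect to be the main difficulty, and it is precisely the point where the convexity argument of \cite{NoSt} and the (sharp but $j$-dependent) kernel bounds of \cite{NoSt-2} fail to give what is needed.

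Granted the uniform estimate, Minkowski's inequality in $\ell^r$ yields the pointwise bound
\[
\Big(\sum_{j\ge0}\big|u_j(L_{\a+aj})^{-\s}(u_j^{-1}f_j)(x)\big|^r\Big)^{1/r}\le C\int_0^\infty K^\a_\s(x,y)\Big(\sum_{j\ge0}|f_j(y)|^r\Big)^{1/r}d\mu_\a(y),
\]
so \eqref{eq:vectorconvo} reduces to the scalar inequality $\|(L_\a)^{-\s}g\|_{L^q(\R_+,d\mu_\a)}\le C\|g\|_{L^p(\R_+,d\mu_\a)}$ applied to $g=(\sum_j|f_j|^r)^{1/r}$. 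That scalar estimate holds exactly for $p,q$ as in \eqref{eq:cond22}, with the two excluded endpoint pairs; it is the known $L^p$--$L^q$ mapping of the potential operator $(L_\a)^{-\s}$ for Laguerre functions of convolution type (\cite{NoSt,NoSt-2}), and can also be deduced by splitting the sharp estimate for $K^\a_\s$ into a local part (comparable to a one-dimensional fractional integral, which produces the lower bound in \eqref{eq:cond22}) and a global part (which produces the strict upper bound and the constraint $\s<\a+1$) and applying Schur-test and Stein--Weiss type arguments. Finally, \emph{necessity} follows by specializing \eqref{eq:vectorconvo} to sequences with $f_j\equiv0$ for $j\ge1$: since $u_0\equiv1$, it collapses to $\|(L_\a)^{-\s}f_0\|_{L^q(d\mu_\a)}\le C\|f_0\|_{L^p(d\mu_\a)}$, and the known sharpness of this scalar bound (\cite{NoSt}) forces \eqref{eq:cond22} together with the exclusion of $p=1$, $q=\frac{\a+1}{\a+1-\s}$ and of $p=\frac{\a+1}{\s}$, $q=\infty$.
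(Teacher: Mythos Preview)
Your argument is correct and, for sufficiency, takes a genuinely different route from the paper. Necessity is handled identically in both (take $f_j=0$ for $j\ge1$ and cite the sharpness of the scalar bound for $(L_\a)^{-\s}$ from \cite{NoSt-2}). For sufficiency, however, the paper does \emph{not} compare the kernels $\widetilde K_j$ with the actual potential kernel $\mathcal{H}_{\a,\s}$ of $(L_\a)^{-\s}$. Instead it shows, via Schl\"afli's integral representation and several technical lemmas, that $(xy)^{aj}\mathcal{H}_{\a+aj,\s}(x,y)$ is dominated, uniformly in $j$, by an explicit \emph{auxiliary} kernel $\mathcal{K}_{\a,\s}$ (and, in part of the range, by a second auxiliary kernel $\overline{\mathcal{K}}_{\a,\s}$); it then proves the $L^p$--$L^q$ boundedness of the corresponding integral operators from scratch at five families of endpoint pairs, using Hardy-type inequalities and a local fractional integral bound, and closes by Riesz--Thorin interpolation. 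Your key observation---that conjugation by $u_j$ cancels the factor $(xy)^{-aj}$ in the heat kernel so that only the Bessel order depends on $j$, and that $I_{\a+aj}(z)\le I_{\a+1}(z)\le C(\a)\,I_\a(z)$ for $j\ge1$ by the monotonicity of $I_\nu$ in $\nu$---lands directly on the kernel of $(L_\a)^{-\s}$ itself, so the scalar mapping result can simply be quoted. Your route is shorter and conceptually cleaner; the paper's route yields explicit pointwise estimates for $\mathcal{H}_{\a+aj,\s}$ that may be of independent interest and, for the sufficiency half, does not rely on the sharp scalar theorem of \cite{NoSt-2}.
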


\begin{rem}
In the case of Theorem \ref{th:LpLq Lag convolution dim1}, we
deduce from our proof that the sequence $\a+aj$ can be changed into
$\a+a(j)$ where $\{a(j)\}_{j\ge 0}$, is a
positive, increasing and unbounded sequence such that $a(0)=0$ and
$a(1)\ge 1$.
\end{rem}

\section{An application of Theorem \ref{th:LpLq Lag convolution dim1}}
\label{section:application}
As we commented in the previous section, the results in
Theorem \ref{th:LpLq Lag Hermite} and Theorem \ref{th:LpLq Lag
convolution dim1} are extensions of some known inequalities for
fractional integrals for Laguerre expansions. However, the
inequality \eqref{eq:vectorconvo} appears in a natural way in the
study of fractional integrals related to the harmonic oscillator.
Indeed, the eigenfunctions of the harmonic oscillator in $\R^n$
verify
\[
(-\Delta+|\cdot|^2)\phi=E \phi,
\]
where $E$ is the corresponding eigenvalue. There are two complete
sets of eigenfunctions for this equation. Using cartesian
coordinates, one obtains the functions
\[
\phi_k(x)=\prod_{i=1}^n h_{k_i}(x_i), \qquad k=(k_1,\dots,k_n)\in
\mathbb{N}^n,
\]
where
$h_{k_i}(x_i)=(\sqrt{\pi}2^{k_i}k_i!)^{-1/2}H_{k_i}(x_i)e^{-x_i^2/2}$,
and $H_j$ denote the Hermite polynomials of degree $j\in
\mathbb{N}$ (see \cite[p. 60]{Lebedev}). In this case the eigenvalues are
$E_k=2|k|+n$. The system of functions $\{\phi_k\}_{k\in
\mathbb{N}^n }$ is orthonormal and complete in $L^2(\R^n,dx)$. The
fractional integrals for this system have been studied in
\cite{BoTo} and \cite{NoSt}. Vector-valued extensions of the
results in both papers for sequences of functions
$\{f_j(x)\}_{j\in \mathbb{N}}$, with $x\in \mathbb{R}^n$, are
trivial.

But the situation is completely different if we analyze the
eigenfunctions of the harmonic oscillator by using spherical
coordinates. Let $\mathcal{H}_j$ be the space of spherical
harmonics of degree $j$ in $n$ variables. Let
$\{\mathcal{Y}_{j,\ell}\}_{\ell=1,\dots,\dim{\mathcal{H}_j}}$ be
an orthonormal basis for $\mathcal{H}_j$ in
$L^2(\mathbb{S}^{n-1},d\s)$. Then the eigenfunctions of the
harmonic oscillator, see \cite{CouDeBSom}, are given by
\[
\tilde{\phi}_{m,j,\ell}(x)=\left(\frac{2\Gamma(j+1)}{\Gamma(m-j+n/2)}\right)^{1/2}L_{j}^{n/2-1+m-2j}(r^2)
\mathcal{Y}_{m-2j,\ell}(x)e^{-r^2/2}, \qquad
r^2=x_1^2+\cdots+x^2_n,
\]
where $m\ge 0$, $j=0,\dots, \lfloor m/2\rfloor$,
$\ell=1,\dots,\dim{\mathcal{H}_{m-2j}}$, and $L_j^{b}$ are
Laguerre polynomials of order $b$ and degree $j\in \mathbb{N}$.
This system is orthonormal and complete in $L^2(\R^n,dx)$ and the associated
eigenvalues are $E_{m,j,\ell}=(n+2m)$. Moreover
\[
L^2(\R^n,dx)=\bigoplus_{m=0}^\infty \mathcal{J}_m
\]
with
\[
\mathcal{J}_m=\{f\in C^{\infty}(\R^n): (-\Delta+|\cdot|^2)f=(n+2m) f
\}.
\]
For each $\s>0$, we define the fractional integrals for the
harmonic oscillator as
\[
(-\Delta+|\cdot|^2)^{-\s}f=\sum_{m=0}^\infty
\frac{1}{(n+2m)^{\s}}\proj_{\mathcal{J}_m}f,
\]
where
\[
\proj_{\mathcal{J}_m}f=\sum_{j=0}^{\lfloor\frac{m}{2}\rfloor}\sum_{\ell=1}^{\dim{\mathcal{H}_{m-2j}}}c_{m,j,\ell}(f)
\tilde{\phi}_{m,j,\ell}, \qquad
c_{m,j,\ell}(f)=\int_{\R^n}\overline{\tilde{\phi}_{m,j,\ell}}(y)f(y)\,
dy.
\]

The most appropriate spaces in order to analyze this kind of
operators are the mixed norm spaces, defined as
\[
L^{p,2}(\R^n,r^{n-1}\,dr \, d\omega)=\{f(x): \|f\|_{L^{p,2}(\R^n,r^{n-1}\,dr \, d\omega)}<\infty\},
\]
where
\[
\|f\|_{L^{p,2}(\R^n,r^{n-1}\,dr \, d\omega)}=\Big(\int_0^\infty
\Big(\int_{\mathbb{S}^{n-1}}|f(rx')|^2\, d\omega(x')\Big)^{p/2}\,
r^{n-1}\, dr\Big)^{1/p},
\]
with the obvious modification in the case $p=\infty$. The main
characteristic of these spaces is that we consider the $L^2$-norm
in the angular part and the $L^p$-norm in the radial part. They are
very different from $L^p(\R^n,dx)$; in fact $L^p(\R^n,dx)\subset
L^{p,2}(\R^n,r^{n-1}\,dr \, d\omega)$ for $p>2$, $L^2(\R^n,dx)=
L^{2,2}(\R^n,r^{n-1}\,dr \, d\omega)$, and
$L^{p,2}(\R^n,r^{n-1}\,dr \, d\omega)\subset L^{p}(\R^n,dx)$ for
$p<2$. These spaces are
suitable when spherical harmonics are involved. Indeed, if a
function $f$ on $\R^n$ is expanded in spherical harmonics,
\begin{equation}
\label{eq:exphar} f(x)=\sum_{j=0}^\infty
\sum_{\ell=1}^{\dim{\mathcal{H}_j}}f_{j,\ell}(r)\mathcal{Y}_{j,\ell}\Big(\frac{x}{r}\Big),
\end{equation}
where
\[
f_{j,\ell}(r)=\int_{\mathbb{S}^{d-1}}f(rx')\overline{\mathcal{Y}_{j,\ell}}(x')\, d\omega(x'),
\]
we have
\[
\|f\|_{L^{p,2}(\R^n,r^{n-1}\,dr \, d\omega)}=
\Big\|\Big(\sum_{j=0}^\infty
\sum_{\ell=1}^{\dim{\mathcal{H}_j}}|f_{j,\ell}(r)|^2\Big)^{1/2}
\Big\|_{L^p((0,\infty),d\mu_{n/2-1})}.
\]
From this and using Theorem \ref{th:LpLq Lag convolution dim1}, we
can prove the following result.
\begin{thm}\label{th:aplication}
Let $n\ge 2$, $0<\s<n/2$, and $1\le p,q\le \infty$. Then there exists a constant depending only on $\s$ such
that
\begin{equation}
\label{eq:applic} \Big\|(-\Delta
+|\cdot|^2)^{-\s}f\Big\|_{L^{q,2}(\R^n,r^{n-1}\,dr\, d\omega)} \le
C\|f\|_{L^{p,2}(\R^n,r^{n-1}\,dr\, d\omega)}
\end{equation}
for all $f\in L^{p,2}(\R^n,r^{n-1}\,dr\, d\omega)$, if and only if
$p$ and $q$ satisfy
\begin{equation*}
\frac1p-\frac{2\s}{n}\le \frac{1}{q}<\frac1p+\frac{2\s}{n},
\end{equation*}
with exclusion of the cases $p=1$ and $q=\frac{n}{n-2\s}$, and $p=\frac{n}{2\s}$ and $q=\infty$.
\end{thm}

\begin{proof}
Consider the decomposition
\[
(-\Delta+|\cdot|^2)^{-\s}f=O_1f+O_2f
\]
where
\[
O_1f=\sum_{k=0}^\infty
\frac{1}{(n+4k)^{\s}}\proj_{\mathcal{J}_{2k}}f \quad \text{ and
}\quad O_2f=\sum_{k=0}^\infty
\frac{1}{(n+4k+2)^{\s}}\proj_{\mathcal{J}_{2k+1}}f.
\]
We start analyzing $O_1$. After some elementary algebraic
manipulations, we have
\[
O_1f=\sum_{j=0}^\infty
\sum_{\ell=1}^{\dim{\mathcal{H}_{2j}}}\sum_{k=0}^\infty
\frac{1}{(n+4j+4k)^{\s}}c_{2j+2k,k,\ell}(f)\tilde{\phi}_{2j+2k,k,\ell}.
\]
By \eqref{eq:exphar}, we can deduce the following identity
immediately
\[
c_{2j+2k,k,\ell}(f)\tilde{\phi}_{2j+2k,k,\ell}(x)=b_k^{n/2-1+2j}((\cdot)^{-2j}f_{2j,\ell})r^{2j}\psi_{k}^{n/2-1+2j}(r)
\mathcal{Y}_{2j,\ell}\Big(\frac{x}{r}\Big),
\]
where we used the notation in the previous section for Laguerre
expansions of convolution type. Then
\[
O_1f(x)= \sum_{j=0}^\infty
\sum_{\ell=1}^{\dim{\mathcal{H}_{2j}}}r^{2j}(L_{n/2-1+2j})^{-\s}((\cdot)^{-2j}f_{2j,\ell})(r)
\mathcal{Y}_{2j,\ell}\Big(\frac{x}{r}\Big).
\]
In a similar way, we conclude that
\[
O_2f(x)= \sum_{j=0}^\infty
\sum_{\ell=1}^{\dim{\mathcal{H}_{2j+1}}}r^{2j+1}(L_{n/2+2j})^{-\s}((\cdot)^{-2j-1}f_{2j+1,\ell})(r)
\mathcal{Y}_{2j+1,\ell}\Big(\frac{x}{r}\Big)
\]
and
\[
(-\Delta+|\cdot|^2)^{-\s}f(x)=\sum_{j=0}^\infty
\sum_{\ell=1}^{\dim{\mathcal{H}_j}}r^{j}(L_{n/2-1+j})^{-\s}((\cdot)^{-j}f_{j,\ell})(r)
\mathcal{Y}_{j,\ell}\Big(\frac{x}{r}\Big).
\]
So, the inequality \eqref{eq:applic} is equivalent to
\begin{multline*}
\Big\|\Big(\sum_{j=0}^\infty
\sum_{\ell=1}^{\dim{\mathcal{H}_j}}(r^{j}(L_{n/2-1+j})^{-\s}((\cdot)^{-j}f_{j,\ell})(r))^2\Big)^{1/2}
\Big\|_{L^p((0,\infty),d\mu_{n/2-1})} \\\le C
\Big\|\Big(\sum_{j=0}^\infty
\sum_{\ell=1}^{\dim{\mathcal{H}_j}}(f_{j,\ell}(r))^2\Big)^{1/2}
\Big\|_{L^p((0,\infty),d\mu_{n/2-1})}.
\end{multline*}
Then the proof follows as a consequence of Theorem \ref{th:LpLq Lag convolution dim1}.
\end{proof}

\begin{rem}
We have just proved that Theorem \ref{th:aplication} is a consequence of the vector-valued estimates in Theorem \ref{th:LpLq Lag convolution dim1}. An alternative proof could be obtained by using the boundedness of the fractional integrals given in \cite[Theorem 8]{BoTo} and an observation due to Rubio de Francia in \cite[Remark (a)]{Rubio}. This has been pointed out to us by G. Garrig\'os in a personal communication.
\end{rem}
\section{Proof of Theorem \ref{th:LpLq Lag Hermite}}
\label{Section:Laguerre Hermite}
The heat semigroup related to $L_{\a}^H$ is initially defined in
$L^2(\R^n_+,dx)$ as
$$
T_{\a,t}^{H}f=\sum_{m=0}^{\infty}e^{-t(4m+2|\a|+2n)}\sum_{|k|=m}\langle
f, \varphi_k^{\a}\rangle\varphi_k^{\a},\quad t>0,
$$
and by $\langle f,g\rangle$ we denote
$\int_{\R_+^n}f(x)\overline{g(x)}\,dx$. We can write the heat
semigroup $\{T_{\a,t}^{H}\}_{t>0}$ as an integral operator
$$
T_{\a,t}^{H}f(x)=\int_{\R_+^n}G_{\a,t}^{H}(x,y)f(y)\,dy,
$$
where the Laguerre heat kernel is given by
\begin{equation*}
G_{\a,t}^{H}(x,y)=\sum_{m=0}^{\infty}e^{-t(4m+2|\a|+2n)}
\sum_{|k|=m}\varphi_k^{\a}(x)\varphi_k^{\a}(y).
\end{equation*}
The explicit expression for the Laguerre heat kernel is known and it can be found in \cite[(4.17.6)]{Lebedev}:
$$
G_{\a,t}^{H}(x,y)=(\sinh2t)^{-n}\exp\Big(-\frac12\coth(2t)(|x|^2+|y|^2)\Big)
\prod_{i=1}^n(x_iy_i)^{1/2}I_{\a_i}\Big(\frac{x_iy_i}{\sinh2t}\Big),
$$
with $I_{\nu}$ denoting the modified Bessel function of the first
kind and order $\nu$, see \cite[Chapter 5]{Lebedev}.

We use Schl\"afli's integral representation of Poisson's type for the
modified Bessel function, see \cite[(5.10.22)]{Lebedev},
\begin{equation*}
I_{\nu}(z)=z^{\nu}\int_{-1}^1\exp(-zs) \,d\Pi_{\nu}(s), \quad
|\arg z|<\pi, \,\,\, \nu>-\frac12,
\end{equation*}
where the measure $d\Pi_{\nu}(u)$ is given by
\begin{equation}\label{eq:measure Pi}
d\Pi_{\nu}(u)=\frac{(1-u^2)^{\nu-1/2}du}{\sqrt{\pi}2^{\nu}\Gamma(\nu+1/2)}, \quad \nu>-1/2.
\end{equation}
In the limit case $\nu=-1/2$, we put $\Pi_{-1/2}=\tfrac12(\delta_{-1}+\delta_1)$.
Consequently, for $\a\in[-1/2,\infty)^n$, the kernel can be expressed as
$$
G_{\a,t}^{H}(x,y)=(xy)^{\a+\boldsymbol{1}/2}\big(\sinh(2t)\big)^{-n-|\a|}\int_{[-1,1]^n}
\exp\Big(-\frac12\coth(2t)(|x|^2+|y|^2)-\sum_{i=1}^n\frac{x_iy_is_i}{\sinh(2t)}\Big)d\Pi_{\a}(s),
$$
where $\boldsymbol{1}=(1,\ldots,1)\in \N^n$,
$xy=(x_1y_1,\ldots,x_ny_n)$,
$x^{\a}=x_1^{\a_1}\cdot\ldots\cdot x_n^{\a_n}$ and
$$
d\Pi_{\a}(s)=\prod_{i=1}^n\frac{(1-s_i^2)^{\a_i-1/2}}{2^{\a_i}\sqrt\pi\Gamma(\a_i+1/2)}ds_i.
$$
Letting
\begin{equation}\label{eq:qs}
q_{\pm}=q_{\pm}(x,y,s)=|x|^2+|y|^2\pm2\sum_{i=1}^nx_iy_is_i,
\end{equation}
the change of variable
\begin{equation}\label{eq:cambio Meda}
t(\xi)=\frac12\log\frac{1+\xi}{1-\xi}, \quad \xi\in(0,1),
\end{equation}
leads to
\begin{equation}\label{eq:kernel tras cambio Meda}
G_{\a,t(\xi)}^{H}(x,y)=(xy)^{\a+\boldsymbol{1}/2}\Big(\frac{1-\xi^2}{2\xi}\Big)^{n+|\a|}\int_{[-1,1]^n}
\exp\Big(-\frac{1}{4\xi}q_+(x,y,s)-\frac{\xi}{4}q_-(x,y,s)\Big)d\Pi_{\a}(s).
\end{equation}

The following technical lemma, which we will use to get estimates for the kernel $G_{\a,t(\xi)}^{H}$, can be found in \cite[Lemma 2.1]{NoSt}.
\begin{lem}\label{lem:Nowak Stempak}
Let $a\in\R$ be fixed and $T>0$. Then
$$
\int_0^1\zeta^{-a}\exp(-T\zeta^{-1})\,d\zeta\le C\exp(-T/2), \quad T\ge1,
$$
and for $0<T<1$
\begin{equation*}
\int_0^1\zeta^{-a}\exp(-T\zeta^{-1})\,d\zeta \simeq \begin{cases}  T^{-a+1}, & a>1,\\
    \log(2/T), & a=1,\\
     1, & a<1.
\end{cases}
\end{equation*}
\end{lem}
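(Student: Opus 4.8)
The plan is to reduce the lemma to elementary estimates via the substitution $u=1/\zeta$, which sends the integral to
\[
\int_0^1\zeta^{-a}\exp(-T\zeta^{-1})\,d\zeta=\int_1^\infty u^{a-2}e^{-Tu}\,du.
\]
This removes the singularity at the left endpoint and displays the dependence on $T$ cleanly; indeed the right-hand side equals $T^{1-a}\Gamma(a-1,T)$, where $\Gamma(\cdot,\cdot)$ is the upper incomplete Gamma function (the representation $\int_x^\infty t^{s-1}e^{-t}\,dt$ makes sense for every real $s$ and $x>0$), so in principle the result follows from standard asymptotics of $\Gamma(a-1,T)$. I would nonetheless carry out the direct estimates.

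For $T\ge1$ I would write $e^{-Tu}=e^{-Tu/2}e^{-Tu/2}$ and use the two bounds $e^{-Tu/2}\le e^{-T/2}$ (since $u\ge1$) and $e^{-Tu/2}\le e^{-u/2}$ (since $T\ge1$), obtaining
\[
\int_1^\infty u^{a-2}e^{-Tu}\,du\le e^{-T/2}\int_1^\infty u^{a-2}e^{-u/2}\,du,
\]
where the last integral is a finite constant depending only on $a$ --- there is no issue at the origin because the lower limit is $1$, and the exponential controls the tail for every $a\in\R$. This gives the first assertion.

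For $0<T<1$ I would rescale by $v=Tu$ to reach the exact identity
\[
\int_1^\infty u^{a-2}e^{-Tu}\,du=T^{1-a}\int_T^\infty v^{a-2}e^{-v}\,dv,
\]
and then split $\int_T^\infty=\int_T^1+\int_1^\infty$. The tail $\int_1^\infty v^{a-2}e^{-v}\,dv$ is a positive finite constant, while on $[T,1]$ one has $e^{-1}\le e^{-v}\le1$, so $\int_T^1 v^{a-2}e^{-v}\,dv$ is comparable to $\int_T^1 v^{a-2}\,dv$. Evaluating the latter yields the three regimes: it stays bounded as $T\to0^+$ when $a>1$ (the full integral then tends to $\Gamma(a-1)$, so the product is $\simeq T^{-a+1}$); it equals $\log(1/T)$ when $a=1$ (the product, after absorbing the additive constant, is $\simeq\log(2/T)$); and it grows like $T^{a-1}/(1-a)$ when $a<1$ (this term dominates the bounded contribution from $[1,\infty)$, so the product is $\simeq1$). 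Multiplying by the prefactor $T^{1-a}$ in each case produces the claimed trichotomy.

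I do not expect a genuine obstacle: the argument is routine once $u=1/\zeta$ is introduced. The only point needing attention is that the regime $0<T<1$ asks for two-sided comparability, so I would keep track of upper and lower constants throughout --- in particular in the borderline case $a=1$, checking that $\log(2/T)\simeq\log(1/T)$ for $T$ small while both quantities remain bounded away from $0$ and $\infty$ on any interval $[\delta,1)$, and in the case $a<1$, verifying that the divergent term $T^{a-1}$ really dominates the bounded contribution coming from the tail.
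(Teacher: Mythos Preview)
Your argument is correct. The substitution $u=1/\zeta$ cleanly reduces the question to estimating $\int_1^\infty u^{a-2}e^{-Tu}\,du$, and your splitting $e^{-Tu}\le e^{-T/2}e^{-u/2}$ for $T\ge1$ and the rescaling $v=Tu$ followed by $\int_T^1+\int_1^\infty$ for $0<T<1$ deliver the three regimes with two-sided bounds. The only mildly delicate places are exactly the ones you flag: in the case $a<1$ you should make explicit that for $T$ bounded away from $0$ the sum of the two pieces (the near-zero $\int_T^1$ and the fixed positive tail) is still bounded above and below by positive constants, so that the whole expression is $\simeq1$ uniformly on $(0,1)$; and in the case $a=1$, that $\log(1/T)+c_0\simeq\log(2/T)$ on all of $(0,1)$ because both sides are bounded between positive constants near $T=1$.

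As for comparison with the paper: there is nothing to compare, because the paper does not prove this lemma at all --- it is quoted verbatim from \cite[Lemma~2.1]{NoSt} and used as a black box. Your write-up therefore supplies a self-contained elementary proof where the paper simply cites.
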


\begin{prop}\label{th:acotacion nucleo} Let
$\a\in[-1/2,\infty)^n$. Then
$$
G_{\a,t(\xi)}^{H}(x,y)\le C\left(\frac{1-\xi^2}{\xi}\right)^{n/2}
\exp\Big(-\frac{|x-y|^2}{4\xi}-\frac{\xi |x+y|^2}{4}\Big),
$$
with $C$ independent of $\a$.
\end{prop}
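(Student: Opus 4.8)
The plan is to reduce the $n$-dimensional bound to a one-variable estimate, and that in turn to a Beta-type integral inequality uniform in the order parameter. The starting point is \eqref{eq:kernel tras cambio Meda}. Since $d\Pi_{\a}$ is a product measure, $q_{\pm}(x,y,s)=\sum_{i=1}^{n}(x_i^2+y_i^2\pm2x_iy_is_i)$ splits into one-variable pieces, and the prefactors $(xy)^{\a+\boldsymbol{1}/2}$ and $(\tfrac{1-\xi^2}{2\xi})^{n+|\a|}$ factor coordinatewise, the kernel factorizes as $G_{\a,t(\xi)}^{H}(x,y)=\prod_{i=1}^{n}K_{\a_i}(x_i,y_i)$, where for $\a>-1/2$ and $a,b>0$
\[
K_{\a}(a,b)=(ab)^{\a+1/2}\Big(\frac{1-\xi^2}{2\xi}\Big)^{1+\a}\int_{-1}^{1}\exp\Big(-\frac{a^2+b^2+2abs}{4\xi}-\frac{\xi(a^2+b^2-2abs)}{4}\Big)\,d\Pi_{\a}(s),
\]
with $d\Pi_{\a}$ now the one-variable factor of the product measure in \eqref{eq:kernel tras cambio Meda} (and $d\Pi_{-1/2}=\tfrac12(\delta_{-1}+\delta_{1})$ in the limit case). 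Since the claimed majorant also factors into one-variable Gaussians, it suffices to prove
\[
K_{\a}(a,b)\le C\Big(\frac{1-\xi^2}{\xi}\Big)^{1/2}\exp\Big(-\frac{(a-b)^2}{4\xi}-\frac{\xi(a+b)^2}{4}\Big),\qquad \a\ge-1/2,\ a,b>0,
\]
with $C$ independent of $\a$ and $\xi$; the proposition then follows on multiplying over $i=1,\dots,n$, with a constant depending only on $n$.

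The second step is an exact completion of the square in the exponent: a direct computation shows
\[
\frac{a^2+b^2+2abs}{4\xi}+\frac{\xi(a^2+b^2-2abs)}{4}=\frac{(a-b)^2}{4\xi}+\frac{\xi(a+b)^2}{4}+\frac{(1-\xi^2)\,ab\,(1+s)}{2\xi},
\]
the cross terms in $ab$ cancelling identically. Because $s\in[-1,1]$ and $\xi\in(0,1)$ the last summand is nonnegative, so $\exp(-(1-\xi^2)ab(1+s)/(2\xi))\le1$; pulling the fixed Gaussian outside the integral reduces the matter to
\[
(ab)^{\a+1/2}\Big(\frac{1-\xi^2}{2\xi}\Big)^{1+\a}\int_{-1}^{1}e^{-\frac{(1-\xi^2)ab(1+s)}{2\xi}}\,d\Pi_{\a}(s)\le C\Big(\frac{1-\xi^2}{\xi}\Big)^{1/2}.
\]
By Schl\"afli's integral representation this is equivalent to the uniform-in-order bound $z^{1/2}e^{-z}I_{\a}(z)\le C$ for $\a\ge-1/2$, $z>0$, though we shall not use that reformulation.

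To prove this last inequality, put $\beta=\tfrac{1-\xi^2}{2\xi}$. The case $\a=-1/2$ is immediate, for then the left side is $\beta^{1/2}\cdot\tfrac12(1+e^{-2\beta ab})\le(2\beta)^{1/2}$. For $\a>-1/2$ one splits $\int_{-1}^{1}=\int_{-1}^{0}+\int_{0}^{1}$. On $[-1,0]$, substituting $v=1+s\in[0,1]$ and using $v\le1-s^2=v(2-v)\le2v$, one enlarges the integral to $\int_{0}^{\infty}$ and obtains a $\Gamma(\a+1/2)$ that cancels the normalizing constant of $d\Pi_{\a}$, leaving a quantity $\le C\beta^{1/2}$. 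On $[0,1]$, where $1+s\in[1,2]$, one pulls out $e^{-\beta ab(1+s)}\le e^{-\beta ab}$, substitutes $w=1-s\in[0,1]$, uses $w\le1-s^2=w(2-w)\le2w$, and integrates $w^{\a-1/2}$ over $[0,1]$; the factor $(\a+1/2)^{-1}$ so produced merges with $\Gamma(\a+1/2)^{-1}$ via $\Gamma(\a+3/2)=(\a+1/2)\Gamma(\a+1/2)$, and the remaining $(\beta ab)^{\a+1/2}e^{-\beta ab}\le(\a+1/2)^{\a+1/2}e^{-(\a+1/2)}\le\Gamma(\a+3/2)$ cancels that Gamma factor, again leaving $\le C\beta^{1/2}$. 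The stray powers $2^{-\a}$ and $2^{\a-1/2}$ arising en route are all $\le2^{1/2}$ for $\a\ge-1/2$, so $C$ is absolute; multiplying the one-variable bounds over $i=1,\dots,n$ completes the argument.

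The only genuinely delicate point is this uniformity in $\a$ near the endpoint $\a=-1/2$: there the normalizing constant $\Gamma(\a+1/2)^{-1}$ blows up, and for $\a<1/2$ the weight $(1-s^2)^{\a-1/2}$ is singular at both $s=-1$ and $s=1$ with exponent in $(-1,0)$, so the crude comparison $1-s^2\le2(1+s)$ is worthless near $s=1$ and the two endpoint singularities have to be separated by hand. The exact cancellation of the spurious factor $(\a+1/2)^{-1}$ against the Gamma normalization — together with the classical inequality $\Gamma(x+1)\ge(x/e)^{x}$, $x\ge0$ — is what keeps $C$ free of $\a$; the remaining steps are routine estimates of elementary Gamma integrals.
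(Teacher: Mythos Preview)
Your proof is correct and follows essentially the same route as the paper: factorize coordinatewise, pull out the Gaussian $\exp\big(-\tfrac{(a-b)^2}{4\xi}-\tfrac{\xi(a+b)^2}{4}\big)$ via the substitution $s\mapsto 1+s$ (the paper writes this as $s=2u-1$), and then bound the residual integral by producing a $\Gamma(\a+1/2)$ that cancels the normalizing constant of $d\Pi_{\a}$. The only difference is in how the ``far'' half of the $s$-integral is handled: the paper observes that by the symmetry of $u^{\a-1/2}(1-u)^{\a-1/2}$ and the monotonicity of the exponential one has $\int_{1/2}^{1}\le\int_{0}^{1/2}$, thereby reducing everything to a single Gamma integral, whereas you treat $s\in[0,1]$ separately via the inequality $(\beta ab)^{\a+1/2}e^{-\beta ab}\le(\a+1/2)^{\a+1/2}e^{-(\a+1/2)}\le\Gamma(\a+3/2)$; both arguments yield an absolute constant, and the paper's symmetry trick is just a slightly cleaner shortcut.
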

\begin{proof}
Let $q_{\pm,i}=q_{\pm,i}(x_i,y_i,s_i)=x_i^2+y_i^2\pm2x_iy_is_i$,
for $i=1,\ldots,n$. From this identity and \eqref{eq:qs}, it
follows that $q_{\pm}(x,y,s)=\sum_{i=1}^nq_{\pm,i}(x_i,y_i,s_i)$.
Observe that
\begin{equation}\label{eq:producto integrales}
\int_{[-1,1]^n} \exp\Big(-\frac{q_+}{4\xi}-\frac{\xi
q_{-}}{4}\Big)d\Pi_{\a}(s)
=\prod_{i=1}^n\int_{-1}^1\exp\Big(-\frac{q_{+,i}}{4\xi} -\frac{\xi
q_{-,i}}{4}\Big)d\Pi_{\a_i}(s_i),
\end{equation}
so it suffices to deal with the integral in dimension one. The
case $\a_i=-1/2$ is elementary, so we obtain the estimate for
$\a_i>-1/2$. We write
$$
J:=\int_{-1}^1\exp\Big(-\frac{q_{+,i}}{4\xi} -\frac{\xi
q_{-,i}}{4}\Big)(1-s_i^2)^{\a_i-1/2}\,ds_i.
$$
With the change of variable $s_i=2u-1$ we have
\begin{equation*}
J=4^{\a_i}\exp\left(-\frac{(x_i-y_i)^2}{4\xi}-\frac{\xi
(x_i+y_i)^2}{4}\right)\int_0^1
\exp\left(-x_iy_iu\left(\frac{1}{\xi}-\xi\right)\right)u^{\a_i-1/2}(1-u)^{\a_i-1/2}\,
du.
\end{equation*}
It is easy to check that
\begin{multline*}
\int_{1/2}^1
\exp\bigg(-x_iy_iu\bigg(\frac{1}{\xi}-\xi\bigg)\bigg)u^{\a_i-1/2}(1-u)^{\a_i-1/2}\,
du\\ \le \int_0^{1/2}
\exp\bigg(-x_iy_iu\bigg(\frac{1}{\xi}-\xi\bigg)\bigg)u^{\a_i-1/2}(1-u)^{\a_i-1/2}\,
du
\end{multline*}
and then
\begin{equation*}
J\le  4^{\a_i+1/2}\exp\left(-\frac{(x_i-y_i)^2}{4\xi}-\frac{\xi
(x_i+y_i)^2}{4}\right)\int_0^{1/2}
\exp\left(-x_iy_iu\left(\frac{1}{\xi}-\xi\right)\right)u^{\a_i-1/2}\,
du.
\end{equation*}
Now, taking $x_i y_iu\left(\frac{1}{\xi}-\xi\right)=z$, we get
\begin{align*}
J&\le\frac{4^{\a_i+1/2}}{(x_iy_i)^{\a_i+1/2}}\left(\frac{\xi}{1-\xi^2}\right)^{\a_i+1/2}
\exp\left(-\frac{(x_i-y_i)^2}{4\xi}-\frac{\xi
(x_i+y_i)^2}{4}\right)
\int_0^{x_iy_i(1-\xi^2)/(2\xi)}e^{-z}z^{\alpha_i-1/2}\, dz\\ & \le
\frac{4^{\a_i+1/2}}{(x_iy_i)^{\a_i+1/2}}\left(\frac{\xi}{1-\xi^2}\right)^{\a_i+1/2}
\exp\left(-\frac{(x_i-y_i)^2}{4\xi}-\frac{\xi
(x_i+y_i)^2}{4}\right)\Gamma(\alpha_i+1/2).
\end{align*}
From \eqref{eq:measure Pi}, \eqref{eq:kernel tras cambio Meda} and
\eqref{eq:producto integrales}, the estimate in the proposition
follows.
\end{proof}

For each $\s>0$, we define the potential operator
\begin{equation*}
\mathcal{I}^{H}_{\a,\s}f(x)=\int_{\R^n_+}\mathcal{H}^{H}_{\a,\s}(x,y)f(y)\,dy,
\quad x\in \R^n_+,
\end{equation*}
where
\begin{equation}\label{eq:potential kernel}
\mathcal{H}^{H}_{\a,\s}(x,y)=\frac{1}{\Gamma(\s)}\int_0^{\infty}G_{\a,t}^{H}(x,y)t^{\s-1}\,dt,
\quad x,y\in \R^n_+
\end{equation}
is the potential kernel. On the other hand, following the notation in \cite{NoSt}, we define the function $K_{\s}(x)$,
$x\in\R^n\setminus\{0\}$, by
\begin{equation*}
K_{\s}(x)=\exp(-c|x|^2), \quad |x|\ge 1,\qquad
K_{\s}(x)=
\begin{cases}
\frac{1}{|x|^{n-2\s}}, & \s<n/2,\\
\log\Big(\frac{c}{|x|}\Big), & \s=n/2,\\
1, &\s>n/2,
\end{cases}
\quad |x|<1.
\end{equation*}

The estimate contained in the following proposition has sharper extensions in the existing literature, see \cite[Proposition 2]{BoTo} and \cite[Lemma 5.3]{StTo}. In our case, we do not need so general estimates for our purposes, so we decided to include a short proof to make the paper self-contained.

\begin{prop}\label{th:estimacion potential kernel}
Let $\s>0$ and $\a\in[-1/2,\infty)^n$. Then
$$
\mathcal{H}_{\a,\s}^{H}(x,y)\le C_{\s}K_{\s}(x-y),
$$
with $C_{\s}$ independent of $\a$.
\end{prop}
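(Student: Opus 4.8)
The plan is to substitute the explicit form of the heat kernel after Meda's change of variable into the definition \eqref{eq:potential kernel}, replace $G^H_{\a,t(\xi)}$ by the $\a$-free majorant of Proposition \ref{th:acotacion nucleo}, and then reduce everything to the one-variable integral handled by Lemma \ref{lem:Nowak Stempak}. First I would apply the substitution $t=t(\xi)$ from \eqref{eq:cambio Meda}; since $dt=d\xi/(1-\xi^2)$, the variable $t$ sweeps $(0,\infty)$ as $\xi$ sweeps $(0,1)$, and
\[
\mathcal{H}_{\a,\s}^{H}(x,y)=\frac{1}{\Gamma(\s)}\int_0^1 G_{\a,t(\xi)}^{H}(x,y)\,t(\xi)^{\s-1}\,\frac{d\xi}{1-\xi^2}.
\]
Inserting Proposition \ref{th:acotacion nucleo} and discarding the factor $\exp(-\xi|x+y|^2/4)\le 1$ when $|x-y|<1$ (but keeping it when $|x-y|\ge 1$), matters reduce to estimating
\[
\int_0^1 \frac{(1-\xi^2)^{n/2-1}}{\xi^{n/2}}\exp\!\Big(-\frac{|x-y|^2}{4\xi}\Big)\,t(\xi)^{\s-1}\,d\xi .
\]
At this stage I would record two elementary facts: $t(\xi)\simeq\xi$ for $\xi\in(0,1/2]$, and $(1-\xi^2)^{n/2-1}t(\xi)^{\s-1}$ is integrable near $\xi=1$ for every $n\ge 1$ and $\s>0$, because there $t(\xi)$ grows only like $\log\frac{2}{1-\xi}$ while $(1-\xi^2)^{n/2-1}$ is at worst $(1-\xi)^{-1/2}$.

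Next I would split the $\xi$-integral at $\xi=1/2$. On $(1/2,1)$ the factor $\xi^{-n/2}$ is bounded, $(1-\xi^2)^{n/2-1}t(\xi)^{\s-1}$ is integrable by the remark above, and $\exp(-|x-y|^2/(4\xi))\le 1$; hence this piece is bounded by a constant, which for $|x-y|<1$ is dominated by $C_\s K_\s(x-y)$ once the constant $c$ in the definition of $K_\s$ is fixed large enough that $K_\s$ stays bounded below on $\{|x-y|<1\}$. For $|x-y|\ge 1$ one instead uses $\exp(-|x-y|^2/(4\xi))\le\exp(-|x-y|^2/4)$ for $\xi<1$, which already gives the required Gaussian bound on this piece.

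On $(0,1/2)$ one has $(1-\xi^2)^{n/2-1}\simeq 1$ and $t(\xi)^{\s-1}\simeq\xi^{\s-1}$, so this piece is comparable to $\int_0^{1/2}\xi^{\s-1-n/2}\exp(-|x-y|^2/(4\xi))\,d\xi$, to which Lemma \ref{lem:Nowak Stempak} applies with $a=n/2+1-\s$ and $T=|x-y|^2/4$. When $|x-y|<1$ (so $T<1$) the three regimes $a>1$, $a=1$, $a<1$ of the lemma correspond exactly to $\s<n/2$, $\s=n/2$, $\s>n/2$ and produce, up to constants, $|x-y|^{2\s-n}$, $\log(c/|x-y|)$, and $1$, that is, precisely $K_\s(x-y)$. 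When $|x-y|\ge 1$ I would split $\exp(-|x-y|^2/(4\xi))=\exp(-|x-y|^2/(8\xi))\exp(-|x-y|^2/(8\xi))$ and use $\xi\le 1/2$ to bound one factor by $\exp(-|x-y|^2/4)$; the leftover integral $\int_0^{1/2}\xi^{\s-1-n/2}\exp(-|x-y|^2/(8\xi))\,d\xi$ is uniformly bounded for $|x-y|\ge 1$ by Lemma \ref{lem:Nowak Stempak}, again yielding the Gaussian bound. Adding the two pieces completes the proof; the constant depends only on $\s$ (and $n$) because the single $\a$-dependent ingredient, Proposition \ref{th:acotacion nucleo}, has an $\a$-independent constant.

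The computation is essentially routine once Proposition \ref{th:acotacion nucleo} is available; the only points requiring a little care are the behaviour of the weight $t(\xi)^{\s-1}$ near $\xi=0$ (where it may blow up if $\s<1$, but is controlled by the Gaussian) and near $\xi=1$ (where it may blow up if $\s>1$, but is controlled by $(1-\xi^2)^{n/2-1}$ and the logarithmic growth of $t$), together with the bookkeeping that matches the trichotomy in Lemma \ref{lem:Nowak Stempak} with the trichotomy in the definition of $K_\s$.
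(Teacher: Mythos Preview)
Your proposal is correct and follows essentially the same route as the paper: Meda's change of variable, the $\alpha$-free bound of Proposition~\ref{th:acotacion nucleo}, the split at $\xi=1/2$, and Lemma~\ref{lem:Nowak Stempak} with $a=n/2+1-\sigma$ on the small-$\xi$ piece. The only cosmetic difference is the treatment of the interval $(1/2,1)$: the paper reverts Meda's change there to obtain $\int_{\log 3}^\infty w^{\sigma-1}e^{-wn/2}\,dw<\infty$ and hence a Gaussian bound $C_\sigma e^{-c|x-y|^2}$ valid for all $x,y$, whereas you argue directly that $(1-\xi^2)^{n/2-1}t(\xi)^{\sigma-1}$ is integrable near $\xi=1$ and then use that $K_\sigma(x-y)$ is bounded below on $\{|x-y|<1\}$ to absorb the resulting constant; both arguments are valid and yield the same conclusion.
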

\begin{proof}
By \eqref{eq:potential kernel}, the change of variable \eqref{eq:cambio Meda},
\eqref{eq:kernel tras cambio Meda}, and Proposition
\ref{th:acotacion nucleo}, we have
\begin{align*}
\mathcal{H}_{\a,\s}^{H}(x,y)&\le\frac{C}{2^{\s-1}}
\int_0^1\Big(\log\frac{1+\xi}{1-\xi}\Big)^{\s-1}
(1-\xi^2)^{n/2-1}\xi^{-n/2}
\exp\Big(-\frac{1}{4\xi}|x-y|^2\Big)\,d\xi\\
&=\int_0^{1/2}+\int_{1/2}^1=:I_1+I_2.
\end{align*}
Concerning $I_1$, observe that there exists $C$ such that
$\log\tfrac{1+\xi}{1-\xi}<C\xi$, for $\xi\in(0,1/2)$. Then, we
apply Lemma \ref{lem:Nowak Stempak} with $a=-\s+1+n/2$ and
$T=|x-y|^2/4$, and we obtain
$$
I_1\le C_{\s}\int_0^{1/2}\xi^{\s-1-n/2}
\exp\Big(-\frac{1}{4\xi}|x-y|^2\Big)\,d\xi\le C_{\s}
\begin{cases}
\frac{1}{|x-y|^{n-2\s}}, & \s<n/2,\\
\log\Big(\frac{c}{|x-y|}\Big), & \s=n/2,\\
1, &\s>n/2,
\end{cases}
$$
for $|x-y|<2$, and
\[
I_1\le C_\s \exp(-c|x-y|^2)
\]
for $|x-y|\ge 2$, which is equivalent to the required bound.
Now we deal with $I_2$. By using that
 $\xi^{-n/2}\sim 1$, for $\xi\in(1/2,1)$, and reverting the change of variable \eqref{eq:cambio Meda} yield
\begin{align*}
I_2&\le C_{\s}\exp(-c|x-y|^2)
\int_{1/2}^1\left(\log\left(\frac{1+\xi}{1-\xi}\right)\right)^{\s-1}(1-\xi^2)^{n/2-1}\,d\xi\\
&\le C_{\s}\exp(-c|x-y|^2)\int_{\log3}^\infty w^{\s-1}
\exp(-wn/2)\,dw\le C_{\s}\exp(-c|x-y|^2),
\end{align*}
which is enough for our purposes.
\end{proof}

\begin{rem}
We emphasize that, although the previous proposition may be known in the literature as a technical lemma from the point of view of fractional integrals for the harmonic oscillator, it is new in the sense that we are estimating the kernel of the fractional integrals for the Laguerre operator with a \textit{constant independent of the parameter $\alpha$}. Of course, the important estimate employed to get Proposition~\ref{th:estimacion potential kernel} is the one proved in Proposition \ref{th:acotacion nucleo}, which delivers an estimate for the heat kernel with a constant independent of $\alpha$.
\end{rem}

Moreover, to complete the proof of Theorem \ref{th:LpLq Lag Hermite}, we will use the following result about
vector-valued extensions of bounded operators. This result is a
version of \cite[Ch. 5, Theorem 1.12]{ellibro} in the setting of
$\ell^r$ spaces.
\begin{lem}\label{Lem:Marcin}
Consider $L^p=L^p(X,m)$, where $(X,m)$ is a
measure space. Let $T:L^p\to L^q$ be a bounded linear operator
which is positive (i.e. $g(x)\ge 0$ implies $Tg(x)\ge 0$), $1\le
p, q\le \infty$, with norm $\|T\|$. Then $T$ has an
$\ell^r$--valued extension for $1\le r\le \infty$ and
$$\Big\|\Big(\sum_j|Tf_j|^r\Big)^{1/r}\Big\|_{L^q}\leq\|T\|\Big\|
\Big(\sum_j|f_j|^r\Big)^{1/r}\Big\|_{L^p},\quad f_j\in L^p.$$
\end{lem}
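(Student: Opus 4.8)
The plan is to deduce the lemma from a single \emph{pointwise} domination, namely that for a.e.\ $x$
\[
\Big(\sum_j\abs{Tf_j(x)}^r\Big)^{1/r}\le T\Big(\Big(\sum_j\abs{f_j}^r\Big)^{1/r}\Big)(x), \qquad 1\le r\le\infty.
\]
We may assume $g:=(\sum_j\abs{f_j}^r)^{1/r}\in L^p$, since otherwise the right-hand side of the asserted inequality is infinite; then $Tg\in L^q$ is well defined and, once the pointwise bound is available, taking $L^q$-norms and using $\norm{Tg}_{L^q}\le\norm{T}\,\norm{g}_{L^p}$ finishes the proof. The advantage of this route is that it uses \emph{only} the duality between $\ell^r$ and $\ell^{r'}$ together with positivity of $T$; no interpolation in the parameter $r$ and no use of the adjoint $T^{*}$ is needed.

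To prepare the pointwise estimate I would first record two elementary consequences of linearity and positivity. First, $T$ is monotone on $L^p$: if $g_1\le g_2$ then $Tg_2-Tg_1=T(g_2-g_1)\ge 0$. Second, $\abs{Tg}\le T\abs{g}$ a.e.\ for $g\in L^p$: for real-valued $g$ this follows from $\pm g\le\abs{g}$ and monotonicity, and for complex-valued $g$ one fixes $x$, writes $\abs{Tg(x)}=T(e^{-i\t(x)}g)(x)$ with a suitable phase $\t(x)$, notes that this quantity is real and hence equals $T\big(\operatorname{Re}(e^{-i\t(x)}g)\big)(x)$, which is $\le T\abs{g}(x)$ by monotonicity.

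Now fix $x$ and a finitely supported scalar sequence $(c_j)$ with $\sum_j\abs{c_j}^{r'}\le 1$. Since the sum is finite, $\sum_j c_jTf_j=T\big(\sum_j c_jf_j\big)$, and therefore
\[
\Big\lvert\sum_j c_jTf_j(x)\Big\rvert=\Big\lvert T\Big(\sum_j c_jf_j\Big)(x)\Big\rvert\le T\Big(\Big\lvert\sum_j c_jf_j\Big\rvert\Big)(x)\le T\Big(\Big(\sum_j\abs{f_j}^r\Big)^{1/r}\Big)(x),
\]
the last inequality by monotonicity of $T$ and the pointwise estimate $\big\lvert\sum_j c_jf_j(y)\big\rvert\le\sum_j\abs{c_j}\,\abs{f_j(y)}\le\big(\sum_j\abs{f_j(y)}^r\big)^{1/r}$, which is just H\"older's inequality in the index $j$. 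Taking the supremum over all such $(c_j)$ and using the elementary identity $\big(\sum_j\abs{a_j}^r\big)^{1/r}=\sup\{\,\lvert\sum_j c_ja_j\rvert: (c_j)\ \text{finitely supported},\ \sum_j\abs{c_j}^{r'}\le 1\,\}$, valid for every $1\le r\le\infty$ (endpoints included), yields the desired pointwise bound.

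The argument is essentially routine and I do not expect a serious obstacle; the only points that require a little care are the complex-valued form of $\abs{Tg}\le T\abs{g}$ above and the measurability/a.e.\ bookkeeping in the last step. For the latter it is enough to take the supremum over finite truncations and over a fixed countable dense family of rational sequences $(c_j)$, so that $x\mapsto\big(\sum_j\abs{Tf_j(x)}^r\big)^{1/r}$ is measurable and the pointwise inequality holds for a.e.\ $x$. This gives precisely the $\ell^r$ version of \cite[Ch.~5, Theorem 1.12]{ellibro}.
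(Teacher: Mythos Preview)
The paper does not supply a proof of this lemma: it is merely stated as an $\ell^r$ version of \cite[Ch.~5, Theorem~1.12]{ellibro} and then used as a black box. Your argument is a correct, self-contained proof, and it is precisely the mechanism that underlies the cited result: positivity gives monotonicity and the pointwise bound $|Tg|\le T|g|$, and then $\ell^r$--$\ell^{r'}$ duality (over finitely supported sequences) converts this into the pointwise domination $(\sum_j|Tf_j|^r)^{1/r}\le T\big((\sum_j|f_j|^r)^{1/r}\big)$, from which the lemma follows with constant exactly $\|T\|$. Your handling of the two delicate points---the complex-valued form of $|Tg|\le T|g|$ via a phase factor, and the a.e.\ bookkeeping by restricting the supremum to a countable dense family of coefficient sequences---is adequate. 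There is nothing to add; what you have written is essentially how the reference proves the general Banach-lattice statement, specialized to $\ell^r$.
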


\begin{proof} [Proof of Theorem \ref{th:LpLq Lag Hermite}]
First of all, analogously as in \cite{NoSt}, we shall show that
$(L_\a^H)^{-\s}=\mathcal{I}^{H}_{\a,\s}$ on
$L^2(\R_+^n,dx)$. Then, it suffices to prove that
$(L_\a^H)^{-\s}\varphi_k^\a=\mathcal{I}^{H}_{\a,\s}\varphi_k^\a$. We have
\begin{align*}
\int_{\R^n_+} \mathcal{H}^{H}_{\a,\s}(x,y)\varphi_k^\a(y)\, dy &=\int_{\R_+^n}\int_0^\infty G_{\a,t}^H(x,y)t^{\s-1}\,dt\varphi_k^\a(y)\, dy\\
&=\varphi_k^\a(x)\int_0^\infty e^{-t(4|k|+2|\a|+2n)}t^{\s-1}\, dt=\Gamma(\s)(L_\a^H)^{-\s}\varphi_k^\a(x),
\end{align*}
and the application of Fubini's theorem in the second identity is justified because $\mathcal{H}^{H}_{\a,\s}(x,\cdot)\le
K_\s(x-\cdot)\in L^1(\R^n,dx)$, for each $x\in \R_+^n$, and
$\varphi_k^\a\in L^\infty(\R_+^n,dx)$.

Now we
observe that, by Proposition \ref{th:estimacion potential kernel},
there exists a constant $C_{\s}$ depending only on $\s$ such that
for a nonnegative function $f$,
$$
\mathcal{I}_{\a+aj,\s}^{H}(f)(x)\le
C_{\s}\int_{\R_+^n}K_{\s}(x-y)f(y)\,dy.
$$

By \cite[Theorem 2.5]{NoSt} and Lemma \ref{Lem:Marcin} (note that
$K_\s(x-y)$ is positive), there exists a constant $C$ depending
only on $\s,s$ and $t$ such that
\begin{equation*}
\Big\|\Big(\sum_{j=0}^\infty\Big|\int_{\R_+^n}K_{\s}(x-y)f_j(y)\,dy\Big|^r\Big)^{1/r}
\Big\|_{L^q(\R_+^n,
|x|^{-sq}dx)} \le
C\Big\|\Big(\sum_{j=0}^\infty|f_j|^r\Big)^{1/r}\Big\|_{L^p(\R_+^n,
|x|^{tp}dx)},
\end{equation*}
for $f_j\in L^p(\R_+^n, |x|^{tp}dx)$.

Therefore,
\begin{equation*}
\Big\|\Big(\sum_{j=0}^\infty|\mathcal{I}_{\a+aj,\s}^{H}(f_j)|^r\Big)^{1/r}
\Big\|_{L^q(\R_+^n, |x|^{-sq}dx)}\le
C\Big\|\Big(\sum_{j=0}^\infty|f_j|^r\Big)^{1/r}\Big\|_{L^p(\R_+^n,
|x|^{tp}dx)},
\end{equation*}
and the proof is complete.
\end{proof}
\section{Proof of Theorem \ref{th:LpLq Lag convolution dim1}}\label{Section:Laguerre convolution}

Recall the differential operator $L_{\a}$ given in \eqref{eq:differential operator}. The heat semigroup
related to $L_{\a}$ is initially defined in
$L^2((0,\infty),d\mu_{\a})$ as
$$
T_{\a,t}f=\sum_{m=0}^{\infty}e^{-t(4m+2\a+2)}\langle
f, \psi_m^{\a}\rangle_{d\mu_{\a}}\psi_m^{\a},\quad t>0,
$$
and by $\langle f,g\rangle_{d\mu_{\a}}$ we denote
$\int_{0}^{\infty}f(r)\overline{g(r)}\,d\mu_{\a}(r)$. We can write the
heat semigroup $\{T_{\a,t}\}_{t>0}$ as an integral operator
$$
T_{\a,t}f(r)=\int_{0}^{\infty}G_{\a,t}(r,s)f(s)\,d\mu_{\a}(s),
$$
where the Laguerre heat kernel in this case is given by
\begin{equation*}
G_{\a,t}(r,s)=\sum_{m=0}^{\infty}e^{-t(4m+2\a+2)}
\psi_m^{\a}(r)\psi_m^{\a}(s).
\end{equation*}
Observe that
\begin{equation}\label{eq:relacion heat kernels}
G_{\a,t}(r,s)=G_{\a,t}^{H}(r,s)(rs)^{-\a-1/2},
\end{equation}
with $G_{\a,t}^{H}$ considered now as a one-dimensional kernel.

We define the potential operator
\begin{equation}\label{eq:fractional operator convo}
\mathcal{I}_{\a,\s}f(r)=\int_{0}^{\infty}\mathcal{H}_{\a,\s}(r,s)f(s)\,d\mu_{\a}(s),
\quad r>0,
\end{equation}
where
\begin{equation}\label{eq:potential kernel convo}
\mathcal{H}_{\a,\s}(r,s)=\frac{1}{\Gamma(\s)}\int_0^{\infty}G_{\a,t}(r,s)t^{\s-1}\,dt,
\quad r,s>0,
\end{equation}
is the potential kernel. Due to \eqref{eq:relacion heat kernels},
\begin{equation}\label{eq:relacion potential kernels}
\mathcal{H}_{\a,\s}(r,s)=\mathcal{H}_{\a,\s}^{H}(r,s)(rs)^{-\a-1/2},
\end{equation}
where $\mathcal{H}_{\a,\s}^{H}$ is the kernel in \eqref{eq:potential kernel} considered in one-dimension.
By using the estimates in the previous section for
$\mathcal{H}_{\a,\s}^{H}$ and the relation \eqref{eq:relacion
potential kernels}, we obtain that
\[
\mathcal{H}_{\a+aj,\s}(r,s)\le C_{\s}
K_\s(r-s)(rs)^{-\alpha-aj-1/2}.
\]
If one uses this estimate to attain $L^p-L^q$ inequalities, then
it turns out that we need an extra restriction on the parameters.
In fact, the condition $\frac{4(\alpha+1)}{2\a+3}<p\le q
<\frac{4(\alpha+1)}{2\a+1}$ arises in the boundedness of $\mathcal{I}_{\a+aj,\s}$ due to the
presence of the factor $(rs)^{-\alpha-1/2}$ and the measure
$d\mu_\a$. So,
in order to eliminate such restriction on $p$ and $q$, one has to
get suitable estimates for the kernel. In this way, the proof of
Theorem \ref{th:LpLq Lag convolution dim1} is based on the
estimates collected in Propositions \ref{prop:mathK} and
\ref{prop:bonto}.

\begin{prop}
\label{prop:mathK} Let $\a\ge -1/2$, $a\ge 1$, $j$ be an integer number such that $j\ge 1$,
and $0<\s<\a+1$. Then
\[
\mathcal{H}_{\a+aj,\s}(r,s)\le C
(rs)^{-aj}\mathcal{K}_{\a,\s}(r,s),\qquad r, s \in (0,\infty),
\]
where $C$ depends on $\alpha$ and $\s$, but not on $j$, and
\begin{equation}
\label{eq:kernelconvo} \mathcal{K}_{\a,\s}(r,s)=\frac{1}{(r+s)^{2\alpha+1}}\begin{cases}
W_{\alpha,\sigma}(r,s), & |r-s|<
1,\\[5pt]
e^{-c(r-s)^2}, & |r-s|\ge 1,
\end{cases}
\end{equation}
for some constant $c>0$, with
\begin{equation*}
W_{\alpha,\sigma}(r,s)=
\begin{cases}
|r-s|^{2\s-1}, & \s<1/2,\\
\log\left(\frac{r+s}{|r-s|}\right), & \s=1/2,\\
\min\{(r+s)^{2\sigma-1},1\}, & \s>1/2.
\end{cases}
\end{equation*}
\end{prop}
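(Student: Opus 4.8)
\emph{The plan is} to reduce the bound, uniformly in $j$, to a single kernel estimate for the \emph{fixed} parameter $\a$, by exploiting the monotonicity of the modified Bessel function in its order. Write $\b=\a+aj$. Combining \eqref{eq:relacion heat kernels} with the explicit expression for $G^{H}_{\b,t}$ in dimension one gives
\[
G_{\b,t}(x,y)=(xy)^{-aj}\,(\sinh 2t)^{-1}\exp\Big(-\tfrac12\coth(2t)(x^2+y^2)\Big)(xy)^{-\a}I_{\b}\Big(\tfrac{xy}{\sinh 2t}\Big).
\]
Hence, once we know that $I_{\b}(z)\le I_{\a}(z)$ for all $z>0$, we get $G_{\b,t}(x,y)\le(xy)^{-aj}G_{\a,t}(x,y)$, and integrating in $t$ against $t^{\s-1}/\Gamma(\s)$ gives
\[
\mathcal{H}_{\b,\s}(x,y)\le(xy)^{-aj}\,\mathcal{H}_{\a,\s}(x,y),\qquad x,y\in\R_+ .
\]
Thus it suffices to prove the $j$-free estimate $\mathcal{H}_{\a,\s}(x,y)\le C_{\a,\s}\mathcal{K}_{\a,\s}(x,y)$ for $\a\ge-1/2$ and $0<\s<\a+1$.

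\emph{The Bessel inequality} $I_{\b}\le I_{\a}$ I would argue as follows. For $j=0$ it is an equality, so assume $j\ge1$; since $a\ge1$, $\b=\a+aj\ge\a+1\ge1/2$. If $\a\ge0$, this is the classical monotonicity of $\nu\mapsto I_{\nu}(z)$ on $[0,\infty)$. If $-1/2\le\a<0$, write $\a=-\gamma$ with $\gamma\in(0,1/2]$; the reflection identity $I_{-\gamma}=I_{\gamma}+\tfrac{2}{\pi}\sin(\gamma\pi)K_{\gamma}$ and $K_{\gamma}>0$ give $I_{\a}(z)=I_{-\gamma}(z)\ge I_{\gamma}(z)$, while $0\le\gamma\le1/2\le\b$ and the same monotonicity give $I_{\gamma}(z)\ge I_{\b}(z)$.

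\emph{The fixed-parameter estimate.} When $\a=-1/2$ one has $\mathcal{K}_{-1/2,\s}(x,y)=K_{\s}(x-y)$ (notation of Section \ref{Section:Laguerre Hermite}, $n=1$) and, by \eqref{eq:relacion potential kernels}, $\mathcal{H}_{-1/2,\s}=\mathcal{H}^{H}_{-1/2,\s}$, so the bound is exactly Proposition \ref{th:estimacion potential kernel}. For $\a>-1/2$, I would repeat the one-dimensional computation of the proof of Proposition \ref{th:acotacion nucleo}, but \emph{keep} the incomplete Gamma factor $\int_{0}^{R}e^{-z}z^{\a-1/2}\,dz$ (with $R=xy(1-\xi^2)/(2\xi)$) instead of bounding it by $\Gamma(\a+1/2)$; using also $\int_{0}^{R}e^{-z}z^{\a-1/2}\,dz\le\min\{\Gamma(\a+\tfrac12),\,R^{\a+1/2}/(\a+\tfrac12)\}$, \eqref{eq:relacion potential kernels}, Meda's change of variable \eqref{eq:cambio Meda} and \eqref{eq:kernel tras cambio Meda}, one is led to
\[
\mathcal{H}_{\a,\s}(x,y)\le C_{\a,\s}\int_{0}^{1}\frac{\exp\big(-\frac{(x-y)^2}{4\xi}-\frac{\xi(x+y)^2}{4}\big)}{\sqrt{\xi(1-\xi^2)}}\,
\min\Big\{(xy)^{-\a-1/2},\,\Big(\tfrac{1-\xi^2}{\xi}\Big)^{\a+1/2}\Big\}\Big(\log\tfrac{1+\xi}{1-\xi}\Big)^{\s-1}d\xi .
\]
The minimum switches at $\xi_{\ast}\in(0,1)$ with $\xi_{\ast}/(1-\xi_{\ast}^{2})=xy$ (so $\xi_{\ast}<\min\{xy,1\}$); on $(\xi_{\ast},1)$ one has $\tfrac{1}{\sqrt{\xi(1-\xi^2)}}(\tfrac{1-\xi^2}{\xi})^{\a+1/2}=\tfrac{(1-\xi^2)^{\a}}{\xi^{\a+1}}$. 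Splitting the integral into $(0,1/2)$ and $(1/2,1)$: on $(1/2,1)$ the exponential is $\le e^{-c(x-y)^2-c(x+y)^2}$ and the remaining factors are integrable, giving a contribution dominated by $\mathcal{K}_{\a,\s}$; on $(0,1/2)$ one uses $\log\tfrac{1+\xi}{1-\xi}\simeq\xi$, $1-\xi^2\simeq1$, $\min\{(xy)^{-p},\xi^{-p}\}=(\max\{xy,\xi\})^{-p}$, and Lemma \ref{lem:Nowak Stempak} — whose three regimes ($a\gtrless1$) correspond, via the exponent $a=3/2-\s$ of $\xi$ near $0$, precisely to the three cases $\s\gtrless1/2$ in the definition of $W_{\a,\s}$ — together with the change of variable $\eta=\xi(x+y)^{2}/4$ on the sub-range where the minimum is $\xi^{-\a-1/2}$, which produces the factor $(x+y)^{-2\a-1}$. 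For $|x-y|\ge1$ one recovers the Gaussian tail from $\tfrac{(x-y)^2}{4\xi}+\tfrac{\xi(x+y)^2}{4}\ge\tfrac12|x^2-y^2|\ge\tfrac12(x-y)^2$. (Alternatively, this fixed-parameter estimate can be quoted from \cite{NoSt-2}.)

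\emph{The main obstacle} is the fixed-parameter estimate: the crude bound produces the prefactor $(xy)^{-\a-1/2}$, which is \emph{larger} than the desired $(x+y)^{-2\a-1}$ when $x\ll y$ (or $y\ll x$), so one cannot afford to simultaneously discard $e^{-\xi(x+y)^2/4}$ and enlarge the $\xi$-integration to all of $(0,1)$; keeping the incomplete Gamma, the full exponent and the restriction $\xi<\xi_{\ast}$ is what makes it work. Here the hypothesis $\s<\a+1$ is crucial — it is exactly what keeps the $\xi\to0^{+}$ part of the integral finite and matches the local singularity of $\mathcal{K}_{\a,\s}$.
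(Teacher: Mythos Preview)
Your approach is correct and takes a genuinely different route from the paper's. The paper works \emph{directly} with the parameter $\a+aj$: it integrates first in $\xi$ via Lemma~\ref{lem:intxi} (producing a factor $\Gamma(\a+aj+1-\s)/q_+^{\a+aj+1-\s}$), then in the angular variable via Lemma~\ref{lem:intlambda} with $b=aj$, $A=(x+y)^2$, $B=4xy$ (producing compensating factors $\Gamma(aj)/\Gamma(aj+1/2-\s)$ and $(4xy)^{-aj}$); the resulting Gamma ratios are uniformly bounded in $j$. For the Gaussian tail it uses the heat-kernel bound of Lemma~\ref{lem:kernelexp}, and for $\s>1/2$ with $x+y$ large it invokes Lemma~\ref{lem:kernelexp2} separately to get the $\min\{(x+y)^{2\s-1},1\}$.

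Your key observation --- that the order-monotonicity $I_{\a+aj}\le I_{\a}$ of the modified Bessel function yields $G_{\a+aj,t}\le(xy)^{-aj}G_{\a,t}$ and hence $\mathcal{H}_{\a+aj,\s}\le(xy)^{-aj}\mathcal{H}_{\a,\s}$ --- bypasses all of this $j$-bookkeeping in one stroke and reduces everything to the \emph{fixed}-parameter bound $\mathcal{H}_{\a,\s}\le C_{\a,\s}\mathcal{K}_{\a,\s}$. That bound is precisely the sharp kernel estimate of~\cite{NoSt-2}; and indeed the paper itself appeals to~\cite{NoSt-2} for the case $j=0$ in the proof of Theorem~\ref{thm:border}, so your reduction is consistent with the paper's own logic. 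Your in-line sketch of the fixed-parameter estimate captures the right mechanisms (the incomplete-Gamma trick giving the $\min$, the $\xi$-split, the correspondence between the three regimes of Lemma~\ref{lem:Nowak Stempak} and $\s\gtrless 1/2$), though the sub-case $\s>1/2$, $x+y$ large, would still need to be singled out --- exactly where the paper needs its separate Lemma~\ref{lem:kernelexp2}. The trade-off: your argument is conceptually cleaner and decouples the $j$-uniformity from the analytic kernel estimate; the paper's approach is fully self-contained (it does not quote~\cite{NoSt-2} for Proposition~\ref{prop:mathK} itself) and its auxiliary Lemmas~\ref{lem:intxi}--\ref{lem:kernelexp}, with their explicit Gamma constants, are reusable tools.
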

The two following lemmas will provide us the main tools to prove
the previous proposition.
\begin{lem}
\label{lem:intxi} Let $c>-1$ and $\ell$ be such that
$0<\s<c+\ell$ and $a>0$. Then
\[
\int_0^1
\left(\log\left(\frac{1+\xi}{1-\xi}\right)\right)^{\s-1}(1-\xi^2)^c{\xi}^{-c-\ell}
 \exp\left(-\frac{a}{4\xi}\right)\, d\xi \le C
\frac{4^{c}\Gamma(c+\ell-\s)}{a^{c+\ell-\s}},
\]
where $C$ is independent of $c$.
\end{lem}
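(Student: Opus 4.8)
The plan is to strip off the logarithmic factor by elementary power-type bounds and then to recognise what is left as a truncation of Euler's integral for the Gamma function. For $\xi\in(0,1)$ the three functions $2\xi$, $\log\frac{1+\xi}{1-\xi}$ and $\frac{2\xi}{1-\xi^2}$ vanish at the origin and have derivatives $2$, $\frac{2}{1-\xi^2}$ and $\frac{2(1+\xi^2)}{(1-\xi^2)^2}$, which satisfy $2\le\frac{2}{1-\xi^2}\le\frac{2(1+\xi^2)}{(1-\xi^2)^2}$ there; hence $2\xi\le\log\frac{1+\xi}{1-\xi}\le\frac{2\xi}{1-\xi^2}$ on $(0,1)$. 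Using the right-hand inequality when $\s\ge1$ and the left-hand one when $\s<1$, and putting $m:=\max\{\s-1,0\}$, we obtain
\[
\Big(\log\frac{1+\xi}{1-\xi}\Big)^{\s-1}\le 2^{\,\s-1}\,\xi^{\,\s-1}(1-\xi^2)^{-m},\qquad \xi\in(0,1).
\]
Inserting this into the integral and writing $\gamma:=c-m$, Lemma~\ref{lem:intxi} follows once we prove
\[
J:=\int_0^1\xi^{\,\s-1-c-\ell}(1-\xi^2)^{\gamma}\exp\!\Big(-\tfrac{a}{4\xi}\Big)\,d\xi\le C\,\frac{4^{c}\,\Gamma(c+\ell-\s)}{a^{\,c+\ell-\s}},
\]
with $C$ depending only on $\s$ and $\ell$.

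If $\gamma\ge0$ (equivalently $c\ge m$) I bound $(1-\xi^2)^{\gamma}\le1$ on $(0,1)$ and enlarge the domain to $(0,\infty)$; this is legitimate because $c+\ell>\s$ forces the exponent $\s-1-c-\ell$ to be strictly below $-1$, so the integral converges at $+\infty$, while the factor $\exp(-a/(4\xi))$ controls the endpoint $\xi\to0^+$. The substitution $u=a/(4\xi)$ then gives
\[
J\le\int_0^{\infty}\xi^{\,\s-1-c-\ell}\exp\!\Big(-\tfrac{a}{4\xi}\Big)\,d\xi=\Big(\tfrac{a}{4}\Big)^{\s-c-\ell}\int_0^{\infty}u^{\,c+\ell-\s-1}e^{-u}\,du=\Big(\tfrac{a}{4}\Big)^{\s-c-\ell}\Gamma(c+\ell-\s),
\]
and since $\big(\tfrac{a}{4}\big)^{\s-c-\ell}=4^{\,\ell-\s}\cdot4^{c}\,a^{-(c+\ell-\s)}$, restoring the factor $2^{\s-1}$ lost at the start yields exactly the claimed bound with $C=2^{\,\s-1}4^{\,\ell-\s}$, independent of $c$.

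The case $\gamma<0$ is the delicate one. It can occur only when $c<m$, that is, when $c$ is confined to a bounded interval (once $\s$ and $\ell$ are fixed), and there I allow the constants to depend on that interval. Now one cannot simply enlarge the domain, since $(1-\xi^2)^{\gamma}$ is singular at $\xi=1$ and is not even defined for $\xi>1$; so I split $J=\int_0^{1/2}+\int_{1/2}^1$. On $(0,1/2)$ one has $(1-\xi^2)^{\gamma}\le(3/4)^{\gamma}\le C_{\s,\ell}$ uniformly over the bounded range of $\gamma$, and this piece is handled exactly as in the previous paragraph. On $(1/2,1)$ one uses $\exp(-a/(4\xi))\le e^{-a/4}$, whence
\[
\int_{1/2}^1\le e^{-a/4}\int_{1/2}^1\Big(\log\frac{1+\xi}{1-\xi}\Big)^{\s-1}(1-\xi^2)^{c}\xi^{-c-\ell}\,d\xi\le C_{\s,\ell}\,e^{-a/4},
\]
the remaining integral being finite (and bounded uniformly over the bounded range of $c$) because $c>-1$. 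Finally $c+\ell-\s$ now ranges over a bounded subset of $(0,\infty)$, so $\sup_{a>0}a^{\,c+\ell-\s}e^{-a/4}=\big(4(c+\ell-\s)/e\big)^{c+\ell-\s}\le C_{\s,\ell}$, i.e.\ $e^{-a/4}\le C_{\s,\ell}\,a^{-(c+\ell-\s)}$; combining this with $4^{c}\ge 1/4$ and $\Gamma(c+\ell-\s)\ge\inf_{t>0}\Gamma(t)>0$ upgrades the bound for $\int_{1/2}^1$ to the one in the lemma and completes the proof. Thus the only genuine obstacle is the regime $\gamma<0$; for $\gamma\ge0$ the estimate is essentially a one-line computation.
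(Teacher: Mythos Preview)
Your argument is correct in the main regime $c\ge m=\max\{\s-1,0\}$, and it is a genuinely different and tidier route than the paper's. The paper always splits at $\xi=1/2$: on $(0,1/2)$ it uses $\log\frac{1+\xi}{1-\xi}\sim\xi$ and the substitution $s=a/(4\xi)$; on $(1/2,1)$ it uses $\log\frac{1+\xi}{1-\xi}\sim-\log(1-\xi^2)$, the elementary inequality $t^{\gamma}e^{-t}\le\gamma^{\gamma}e^{-\gamma}$, an explicit computation of $\int_{1/2}^1(-\log(1-\xi^2))^{\s-1}(1-\xi^2)^{c}\xi\,d\xi$ via the change $1-\xi^2=e^{-t}$, and finally Stirling's formula to reconstitute $\Gamma(c+\ell-\s)$. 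Your global two-sided estimate $2\xi\le\log\frac{1+\xi}{1-\xi}\le\frac{2\xi}{1-\xi^2}$ lets you bypass the splitting and Stirling entirely when $c\ge m$: after absorbing $(1-\xi^2)^{c-m}\le1$ you enlarge to $(0,\infty)$ and read off Euler's integral for $\Gamma(c+\ell-\s)$ directly. This is shorter and makes the $c$-uniformity transparent precisely where it matters (large $c$, i.e.\ $j\to\infty$).

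One point to correct in your residual case $\gamma<0$: the claim that $\int_{1/2}^{1}\big(\log\frac{1+\xi}{1-\xi}\big)^{\s-1}(1-\xi^2)^{c}\xi^{-c-\ell}\,d\xi$ is bounded uniformly over $c\in(-1,m)$ is not true; near $\xi=1$ the integrand behaves like $(-\log(1-\xi))^{\s-1}(1-\xi)^{c}$, so the integral is of order $(c+1)^{-\s}$ and blows up as $c\to-1^{+}$. The paper's proof has exactly the same defect (its step $(c+1)^{-\s-1/2}\le C(c+1)^{-1/2}$ fails for $c<0$), and indeed the lemma as stated cannot hold uniformly all the way down to $c\to-1^{+}$. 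This is immaterial for the applications in the paper, where one always has $c\ge-1/2$; if you add that restriction your argument for $\gamma<0$ goes through as written.
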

\begin{lem}
\label{lem:intlambda} Let $\a\ge -1/2$, $\lambda\in \mathbb{R}$, $b\ge 1$, $0<B<A$, and
\[
I_{\alpha,b}^\lambda= \int_0^1 \frac{(1-s)^{\a+b-1/2}}{(A-Bs)^{\a+b+\lambda+1/2}}\,
ds.
\]
Then, for $\lambda>0$,
\[
I_{\alpha,b}^\lambda\le \frac{\Gamma(b)\Gamma(\lambda)}{\Gamma(b+\lambda)}
\frac{1}{A^{\a+1/2}}\frac{1}{B^{b}}\frac{1}{(A-B)^{\lambda}};
\]
for $\lambda=0$,
\[
I_{\alpha,b}^0\le
\frac{1}{A^{\a+1/2}}\frac{1}{B^{b}}\log\left(\frac{A}{A-B}\right);
\]
and, for  $\lambda<0$,
\[
I_{\alpha,b}^\lambda\le C
\frac{1}{A^{\a+\lambda+1/2}}\frac{1}{B^{b}}.
\]
\end{lem}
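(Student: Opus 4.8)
The plan is to reduce all three cases, by a single change of variable, to the evaluation of a (possibly incomplete) Euler Beta integral. First I would clear the affine term in the denominator: putting $w=\dfrac{B(1-s)}{A-Bs}$, a decreasing bijection of $(0,1)$ onto $(0,B/A)$ for which $1-s=\dfrac{(A-B)w}{B(1-w)}$, $A-Bs=\dfrac{A-B}{1-w}$ and $ds=\dfrac{A-B}{B(1-w)^{2}}\,dw$, a direct substitution gives the exact identity
\[
I_{\a,b}^{\lambda}=\frac{(A-B)^{-\lambda}}{B^{\a+b+1/2}}\int_{0}^{B/A}w^{\a+b-1/2}(1-w)^{\lambda-1}\,dw .
\]
The hypotheses $\a\ge-1/2$ and $b\ge1$ give $\a+1/2\ge0$ and $\a+b-1/2\ge0$, so on the interval of integration one has the two elementary bounds $w^{\a+b-1/2}\le(B/A)^{\a+1/2}w^{b-1}$ and $w^{\a+b-1/2}\le(B/A)^{\a+b-1/2}$; which one to use depends on the sign of $\lambda$.

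For $\lambda>0$ I would apply the first bound and then enlarge the interval from $(0,B/A)$ to $(0,1)$, so that what remains is $\int_0^1 w^{b-1}(1-w)^{\lambda-1}\,dw=\Gamma(b)\Gamma(\lambda)/\Gamma(b+\lambda)$ (finite since $b\ge1$ and $\lambda>0$); reorganising the powers of $A$, $B$ and $A-B$ reproduces exactly the asserted estimate. For $\lambda=0$ the same first bound together with $w^{b-1}\le1$ (valid as $b\ge1$) gives $\int_0^{B/A}w^{\a+b-1/2}(1-w)^{-1}\,dw\le(B/A)^{\a+1/2}\int_0^{B/A}(1-w)^{-1}\,dw=(B/A)^{\a+1/2}\log\frac{A}{A-B}$, and again the powers reassemble into the claimed logarithmic bound.

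The case $\lambda<0$ is the delicate one, and I expect it to be the main obstacle: now $(1-w)^{\lambda-1}$ is no longer integrable at $w=1$, so the truncation of the interval at $w=B/A$ cannot be discarded and the complete Beta integral is not available. Here I would instead use the second bound $w^{\a+b-1/2}\le(B/A)^{\a+b-1/2}$ and evaluate the remaining elementary integral $\int_0^{B/A}(1-w)^{\lambda-1}\,dw=\dfrac{1}{|\lambda|}\bigl((1-B/A)^{\lambda}-1\bigr)\le\dfrac{1}{|\lambda|}\bigl(\tfrac{A-B}{A}\bigr)^{\lambda}$; substituting this back, the factors $(A-B)^{-\lambda}$ and $\bigl(\tfrac{A-B}{A}\bigr)^{\lambda}$ combine to $A^{-\lambda}$, and after using $B<A$ and $b\ge1$ (so $(B/A)^{b-1}\le1$) the whole expression collapses to $\dfrac{1}{A^{\a+\lambda+1/2}B^{b}}$ up to the numerical factor $1/|\lambda|$, which is $\le1$ in the range $|\lambda|\ge1$ relevant for the applications; when $\a+b+\lambda+1/2\le0$ one can also argue directly from the original integral, since then $A-Bs\le A$ gives $I_{\a,b}^{\lambda}\le A^{-(\a+b+\lambda+1/2)}\le A^{-(\a+\lambda+1/2)}B^{-b}$ at once. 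Apart from the $\lambda<0$ range, the only thing that needs attention is the careful bookkeeping of the powers of $A$, $B$ and $A-B$ after the substitution.
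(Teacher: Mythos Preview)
Your argument is correct and is essentially the paper's proof in a slightly different guise: the paper first applies the bound $\bigl(\tfrac{1-s}{A-Bs}\bigr)^{\alpha+1/2}\le A^{-(\alpha+1/2)}$ and then makes the substitution $1-s=\tfrac{A-B}{B}\,z$, which is related to your $w$ by $z=w/(1-w)$; thereafter both proofs run identically (Beta integral for $\lambda>0$, logarithm for $\lambda=0$, and for $\lambda<0$ bound the integrand by $z^{-\lambda-1}$ and integrate over the truncated range, picking up the same $1/|\lambda|$ factor you found). One correction to your commentary: the claim that $|\lambda|\ge1$ in the applications is false---in the paper $\lambda=\tfrac12-\sigma$ with $\tfrac12<\sigma<\alpha+1$, so $|\lambda|$ can be arbitrarily small---but this is harmless, since the constant $C$ in Proposition~\ref{prop:mathK} is allowed to depend on $\sigma$ (and the paper's own proof carries the same $1/|\lambda|$ implicitly in its ``$\sim$'').
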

Additionally, for the case $|r-s|<1$, $r+s>2$, and $\s>1/2$ we will use the following result, which is an immediate consequence of \eqref{eq:relacion heat kernels} and Proposition \ref{th:acotacion nucleo}.
\begin{lem}
\label{lem:kernelexp2} Let $\a\ge -1/2$, $a\ge 1$, and $j\in
\mathbb{N}$. Then
\[
G_{\a+aj,t(\xi)}(r,s)\le C
\frac{(1-\xi^2)^{1/2}}{\xi^{1/2}}(rs)^{-aj-\a-1/2}, \quad r,s \in(0,\infty),
\]
with $C$ a constant independent of $j$.
\end{lem}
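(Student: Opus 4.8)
The plan is to obtain Lemma~\ref{lem:kernelexp2} as an almost immediate consequence of Proposition~\ref{th:acotacion nucleo}, whose essential feature is that the estimate for the Hermite-type Laguerre heat kernel holds with a constant that does not depend on the order. Since $\a\ge-1/2$ and $a\ge 1$, for every $j\in\N$ the shifted parameter $\a+aj$ still belongs to $[-1/2,\infty)$, so Proposition~\ref{th:acotacion nucleo} applies to it in dimension $n=1$ and yields
\[
G^{H}_{\a+aj,t(\xi)}(x,y)\le C\Big(\frac{1-\xi^2}{\xi}\Big)^{1/2}\exp\Big(-\frac{(x-y)^2}{4\xi}-\frac{\xi(x+y)^2}{4}\Big),
\]
with $C$ independent of $j$ (and of $x,y$).

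Next I would transfer this bound to the convolution-type kernel via the pointwise identity \eqref{eq:relacion heat kernels}, which in dimension one and for order $\a+aj$ reads
\[
G_{\a+aj,t}(x,y)=G^{H}_{\a+aj,t}(x,y)\,(xy)^{-aj-\a-1/2},
\]
and which is preserved under Meda's change of variable since it holds pointwise in $t$. Combining this with the previous display and discarding the Gaussian factor (which is $\le 1$) gives exactly
\[
G_{\a+aj,t(\xi)}(x,y)\le C\,\frac{(1-\xi^2)^{1/2}}{\xi^{1/2}}\,(xy)^{-aj-\a-1/2},
\]
as claimed. The only point requiring attention is the independence of $C$ from $j$, and this is guaranteed by the uniformity in the order built into Proposition~\ref{th:acotacion nucleo}; there is essentially nothing else to do.

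If instead a self-contained argument were desired, one could start from \eqref{eq:kernel tras cambio Meda} written for order $\a+aj$, perform the substitution $s=2u-1$ to split off the factor $\exp\big(-\frac{(x-y)^2}{4\xi}-\frac{\xi(x+y)^2}{4}\big)$, and estimate
\[
\int_0^1\exp\Big(-xyu\,\frac{1-\xi^2}{\xi}\Big)\big(u(1-u)\big)^{\a+aj-1/2}\,du\le C_\a\int_0^\infty\exp\Big(-xyu\,\frac{1-\xi^2}{\xi}\Big)u^{\a+aj-1/2}\,du,
\]
which equals $C_\a\,\Gamma(\a+aj+1/2)\,(xy)^{-\a-aj-1/2}\big(\frac{\xi}{1-\xi^2}\big)^{\a+aj+1/2}$ (when $\a+aj<1/2$ one first reduces to $\int_0^{1/2}$ using the $u\leftrightarrow 1-u$ symmetry and bounds $(1-u)^{\a+aj-1/2}$ by a constant). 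Plugging this back, the factors $\Gamma(\a+aj+1/2)$ cancel against the normalization of $d\Pi_{\a+aj}$, the powers of $\frac{1-\xi^2}{\xi}$ collapse to $1/2$, and one is left with a $j$-independent constant. The pitfall of this longer route is precisely the uncontrolled growth in $j$ of the $\Gamma$-factors --- the same phenomenon that, as noted in the introduction, prevents the convexity principle from working here --- and the point is that it is neutralized exactly by this cancellation.
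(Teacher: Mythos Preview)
Your proposal is correct. The paper does not give an explicit proof of this lemma but says it follows the same scheme as Lemma~\ref{lem:kernelexp}; your secondary ``self-contained'' argument is precisely that scheme, while your primary argument --- recognizing the lemma as the one-dimensional case of Proposition~\ref{th:acotacion nucleo} combined with~\eqref{eq:relacion heat kernels} and discarding the Gaussian factor --- is a cleaner shortcut that avoids repeating a computation already done in that proposition.
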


Also, in the case $|r-s|\ge 1$, the result will follow from an appropriate estimate
of the heat kernel.
\begin{lem}
\label{lem:kernelexp} Let $\a\ge -1/2$, $a\ge 1$, and $j$ be an integer number such that $j\ge1$. Then
\begin{multline*}
G_{\a+aj,t(\xi)}(r,s)\le C\dfrac{\Gamma(aj)}{\Gamma(\a+aj+1/2)}\exp\left(-\frac{(r-s)^2}{8\xi}\right)
\frac{(1-\xi^2)^{\a+1}}{\xi^{\a+1}}|r^2-s^2|^{-(2\a+1)}(rs)^{-aj},\\ r,s \in (0,\infty),
\end{multline*}
with $C$ a constant independent of $j$.
\end{lem}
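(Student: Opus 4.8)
The plan is to compute $G_{\a+aj,t(\xi)}(x,y)$ explicitly in dimension one, reorganize the two Gaussian factors so that one of them produces the factor $|x^2-y^2|^{-(2\a+1)}$, and extract the power $(xy)^{-aj}$ from the Bessel integral by a crude but $j$-uniform estimate. First, since for $j=0$ the right-hand side equals $+\infty$ (as $\Gamma(aj)=\Gamma(0)=+\infty$), we may assume $j\ge 1$, so that $\a+aj\ge \a+a\ge 1/2$ and all exponents occurring below are nonnegative. Combining \eqref{eq:relacion heat kernels} with \eqref{eq:kernel tras cambio Meda} (taking $n=1$ and replacing $\a$ by $\a+aj$), rewriting $q_+=(x-y)^2+2xy(1+s)$ and $q_-=(x+y)^2-2xy(1+s)$ in \eqref{eq:qs}, and performing the substitution $s=2u-1$ in the one-dimensional integral against $d\Pi_{\a+aj}$ (so that, by \eqref{eq:measure Pi}, $d\Pi_{\a+aj}(2u-1)=\tfrac{2^{\a+aj}}{\sqrt\pi\,\Gamma(\a+aj+1/2)}(u(1-u))^{\a+aj-1/2}\,du$), one arrives at
\[
G_{\a+aj,t(\xi)}(x,y)=\frac{(1-\xi^2)^{1+\a+aj}}{2\sqrt\pi\,\xi^{1+\a+aj}\,\Gamma(\a+aj+1/2)}\,e^{-\frac{(x-y)^2}{4\xi}-\frac{\xi(x+y)^2}{4}}\int_0^1 e^{-\lambda u}\bigl(u(1-u)\bigr)^{\a+aj-1/2}\,du ,
\]
where $\lambda=\lambda(x,y,\xi)=\tfrac{xy(1-\xi^2)}{\xi}$; this is a routine computation.

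Next I would estimate the inner integral uniformly in $j$. Writing $u^{\a+aj-1/2}=u^{aj-1}u^{\a+1/2}$ and noting that $u^{\a+1/2}(1-u)^{\a+aj-1/2}\le 1$ on $[0,1]$ (both exponents are $\ge 0$ since $\a\ge-1/2$ and $j\ge 1$), we obtain
\[
\int_0^1 e^{-\lambda u}\bigl(u(1-u)\bigr)^{\a+aj-1/2}\,du\le \int_0^1 e^{-\lambda u}u^{aj-1}\,du\le \int_0^\infty e^{-\lambda u}u^{aj-1}\,du=\Gamma(aj)\,\lambda^{-aj}.
\]
Since $\lambda^{-aj}=\xi^{aj}(xy)^{-aj}(1-\xi^2)^{-aj}$, inserting this back and cancelling the $aj$-powers of $\xi$ and $1-\xi^2$ against the prefactor leaves
\[
G_{\a+aj,t(\xi)}(x,y)\le \frac{\Gamma(aj)}{2\sqrt\pi\,\Gamma(\a+aj+1/2)}\,\frac{(1-\xi^2)^{\a+1}}{\xi^{\a+1}}\,(xy)^{-aj}\,e^{-\frac{(x-y)^2}{4\xi}-\frac{\xi(x+y)^2}{4}} .
\]

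Finally I would split $e^{-\frac{(x-y)^2}{4\xi}}=e^{-\frac{(x-y)^2}{8\xi}}\cdot e^{-\frac{(x-y)^2}{8\xi}}$, keep the first factor for the statement, and apply the inequality between the arithmetic and geometric means to the remaining exponent: $\frac{(x-y)^2}{8\xi}+\frac{\xi(x+y)^2}{4}\ge 2\sqrt{\tfrac{(x-y)^2(x+y)^2}{32}}=\tfrac{|x^2-y^2|}{\sqrt8}$, so that $e^{-\frac{(x-y)^2}{8\xi}-\frac{\xi(x+y)^2}{4}}\le e^{-|x^2-y^2|/\sqrt8}\le C_\a\,|x^2-y^2|^{-(2\a+1)}$; the last step is valid because $2\a+1\ge 0$ forces $\sup_{t>0}t^{2\a+1}e^{-t/\sqrt8}=:C_\a<\infty$, a quantity depending only on $\a$. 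Absorbing $\tfrac1{2\sqrt\pi}$ and $C_\a$ into the constant yields the asserted bound with a constant depending only on $\a$.

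The argument is essentially bookkeeping; the one genuinely delicate point is the $j$-independence of the constant, which dictates both how the Gaussian factor is split (so that the arithmetic–geometric mean step manufactures precisely the power $|x^2-y^2|^{-(2\a+1)}$ rather than an uncontrolled one) and why the factor $(xy)^{-aj}$ must be pulled out through the identity $\lambda^{-aj}=\xi^{aj}(xy)^{-aj}(1-\xi^2)^{-aj}$, so that the $j$-dependent powers of $\xi$ and $1-\xi^2$ in the prefactor cancel exactly. Cruder estimates — such as the one in Lemma \ref{lem:kernelexp2}, which bounds $(1-u)^{\a+aj-1/2}\le 1$ too early — would instead leave a residual $(xy)^{-\a-1/2}$ and destroy the uniformity in $j$ needed downstream in Proposition \ref{prop:mathK}.
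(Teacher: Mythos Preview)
Your proof is correct and follows essentially the same route as the paper: express the one-dimensional kernel via the substitution $s=2u-1$, bound the Bessel integral by $\Gamma(aj)\lambda^{-aj}$ after using $u^{\a+1/2}(1-u)^{\a+aj-1/2}\le 1$ (the paper instead first restricts to $[0,1/2]$ and then drops the factor $(1-u)^{\a+aj-1/2}$, an inessential variant), and then split the Gaussian and apply the arithmetic--geometric mean inequality to manufacture $|x^2-y^2|^{-(2\a+1)}$. The only cosmetic difference is that you carry explicit constants throughout, whereas the paper refers back to the integral $J$ from Proposition~\ref{th:acotacion nucleo}.
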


Lemmas \ref{lem:intxi}, \ref{lem:intlambda} and \ref{lem:kernelexp} are rather technical and their proof will be
given in the last section.

\begin{proof}[Proof of Proposition \ref{prop:mathK}]
For $|r-s|<1$, by \eqref{eq:potential kernel convo},
\eqref{eq:relacion heat kernels}, the change of variable \eqref{eq:cambio Meda},
\eqref{eq:kernel tras cambio Meda}, Lemma~\ref{lem:intxi} with
$c=\a+aj$, $\ell=1$ and $a=q_+$, and the change of variable $v=1-2u$, we
have
\begin{align*}
\mathcal{H}_{\a+aj,\s}(r,s)&=C\int_0^{\infty}G_{\a+aj,t}^H(r,s)(rs)^{-\a-aj-1/2}t^{\s-1}\,dt\\
&=C\int_0^1(1-\xi^2)^{\a+aj}(2\xi)^{-1-\a-aj}\Big(\log\Big(\frac{1+\xi}{1-\xi}\Big)\Big)^{\s-1}\\
&\qquad \times \Big(\int_{-1}^1\exp\Big(-\frac{1}{4\xi}q_+(r,s,v)-\frac{\xi}{4}q_-(r,s,v)\Big)d\Pi_{\a+aj}(v)\Big)\,d\xi\\
&\le C
\frac{\Gamma(\a+aj+1-\s)}{\Gamma(\a+aj+1/2)}\int_{-1}^{1}\frac{(1-v^2)^{\a+aj-1/2}}
{(q_{+}(r,s,v))^{\a+aj+1-\s}}\,
dv\\&=C 4^{\a+aj} \frac{\Gamma(\a+aj+1-\s)}{\Gamma(\a+aj+1/2)}
\int_{0}^{1}\frac{(1-u)^{\a+aj-1/2}u^{\a+aj-1/2}}{((r+s)^2-4rsu)^{\a+aj+1-\s}}\,
du\\&\le C4^{\a+aj} \frac{\Gamma(\a+aj+1-\s)}{\Gamma(\a+aj+1/2)}
\int_{0}^{1}\frac{(1-u)^{\a+aj-1/2}}{((r+s)^2-4rsu)^{\a+aj+1-\s}}\,
du.
\end{align*}
We conclude by using Lemma \ref{lem:intlambda} with
$b=aj$, $\lambda=1/2-\s$, $A=(r+s)^2$, and $B=4rs$. Indeed, for $\s<1/2$,
\begin{align*}
\mathcal{H}_{\a+aj,\s}(r,s)&\le C4^{\a+aj}
\frac{\Gamma(\a+aj+1-\s)}{\Gamma(\a+aj+1/2)}\frac{\Gamma(aj)}{\Gamma(aj+1/2-\s)}
\frac{1}{(r+s)^{2\a+1}}
\frac{1}{(4rs)^{aj}}\frac{1}{|r-s|^{1-2\s}}\\&\le
C\frac{1}{(r+s)^{2\a+1}}
\frac{1}{(rs)^{aj}}\frac{1}{|r-s|^{1-2\s}};
\end{align*}
for $\s=1/2$
\[
\mathcal{H}_{\a+aj,\s}(r,s)\le C \frac{1}{(r+s)^{2\a+1}}
\frac{1}{(rs)^{aj}}\log\left(\frac{r+s}{|r-s|}\right);
\]
and, for $\s>1/2$,
\[
\mathcal{H}_{\a+aj,\s}(r,s)\le C \frac{1}{(r+s)^{2\a+2-2\s}}
\frac{1}{(rs)^{aj}}.
\]
The previous estimate for $\mathcal{H}_{\a+aj,\s}$, in the case $\s>1/2$, will be used when $\max\{r,s\}\le 2$ and $|r-s|<1$. For $\max\{r,s\}>2$ and $|r-s|<1$, we obtain a sharper inequality by using Lemma \ref{lem:kernelexp2}. Indeed,
\begin{align*}
\mathcal{H}_{\a+aj,\s}(r,s)&\le C(rs)^{-aj-\a-1/2}\int_0^1\left(\log\left(\frac{1+\xi}{1-\xi}\right)\right)^{\s-1}
(1-\xi^2)^{-1/2}\xi^{-1/2}\, d\xi\\&\le C(rs)^{-aj}(r+s)^{-(2\a+1)},
\end{align*}
where we have used that in this region $rs\sim (r+s)^2$ and that the integral is convergent because $\s>1/2$.

In order to bound the kernel in the case $|r-s|\ge 1$, we use
\eqref{eq:potential kernel convo}, the change of variable \eqref{eq:cambio Meda}, and
Lemma~\ref{lem:kernelexp} to obtain 
\begin{align*}
\mathcal{H}_{\a+aj,\s}(r,s)&\le
C(r+s)^{-(2\a+1)}(rs)^{-aj}\exp\left(-\frac{(r-s)^2}{16}\right)\\&\kern25pt\times
\int_0^1\left(\log\left(\frac{1+\xi}{1-\xi}\right)\right)^{\s-1}
(1-\xi^2)^{\a}\xi^{-\a-1}\exp\left(-\frac{(r-s)^2}{16\xi}\right)\,
d\xi.
\end{align*}
The last integral can be controlled by a constant after applying Lemma
\ref{lem:intxi} with $c=\a$ and $\ell=1$, and the condition
$|r-s|\ge 1$. Then
\[
\mathcal{H}_{\a+aj,\s}(r,s)\le C
(r+s)^{-(2\a+1)}(rs)^{-aj}\exp\left(-\frac{(r-s)^2}{16}\right)
\]
and the proof is finished.
\end{proof}

The next auxiliary result will be used in the proof of Theorem
\ref{th:LpLq Lag convolution dim1}. The proof is also contained in the last section. 
\begin{prop}
\label{prop:bonto} Let $\a \ge -1/2$, $a\ge 1$, $j\in \mathbb{N}$, and $0<\s<\a+1$. Then, for all $r,s>0$, 
\begin{equation}
\label{eq:overkernel} u_j(r)(L_{\a+aj})^{-\s}(u_j^{-1}(\cdot)f)(r)\le C
\int_0^\infty f(s) \overline{\mathcal{K}}_{\a,\s}(r,s)\,
d\mu_\a(s),
\end{equation}
where $C$ is independent of $j$ and
\begin{equation*}
\overline{\mathcal{K}}_{\a,\s}(r,s)=(rs)^{-\a-1/2}\int_0^1
\left(\log\left(\frac{1+\xi}{1-\xi}\right)\right)^{\s-1}\xi^{-1/2}(1-\xi^2)^{-1/2}
\exp\left(-\frac{(r-s)^2}{4\xi}-\frac{\xi(r+s)^2}{4}\right)\,
d\xi.
\end{equation*}
Moreover, for $r>2$,
\begin{equation}
\label{eq:boundoverkernel} r^{2\s}\int_0^\infty
\overline{\mathcal{K}}_{\a,\s}(r,s) \, d\mu_\a(s) \le C.
\end{equation}
\end{prop}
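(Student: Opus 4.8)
The plan is to establish the two assertions of the proposition separately. For the pointwise estimate \eqref{eq:overkernel}, I would first check that $(L_{\a+aj})^{-\s}=\mathcal{I}_{\a+aj,\s}$ as operators on $L^2(\R_+,d\mu_{\a+aj})$, arguing exactly as for the Hermite system at the beginning of the proof of Theorem \ref{th:LpLq Lag Hermite}: both are bounded on that space and agree on the eigenfunctions $\ell_m^{\a+aj}$, since $\int_0^\infty G_{\a+aj,t}(x,y)\ell_m^{\a+aj}(y)\,d\mu_{\a+aj}(y)=e^{-t(4m+2(\a+aj)+2)}\ell_m^{\a+aj}(x)$ and $\frac1{\Gamma(\s)}\int_0^\infty e^{-t\lambda}t^{\s-1}\,dt=\lambda^{-\s}$, the use of Fubini being legitimate because $\mathcal{H}_{\a+aj,\s}(x,\cdot)$ is $d\mu_{\a+aj}$-integrable (by Proposition \ref{prop:mathK}) and $\ell_m^{\a+aj}$ is bounded. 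Since $d\mu_{\a+aj}(y)=y^{2aj}\,d\mu_\a(y)=u_j(y)^2\,d\mu_\a(y)$, for nonnegative $f$ this yields
\[
u_j(x)(L_{\a+aj})^{-\s}(u_j^{-1}f)(x)=\int_0^\infty (xy)^{aj}\,\mathcal{H}_{\a+aj,\s}(x,y)\,f(y)\,d\mu_\a(y),
\]
so it remains to prove $\mathcal{H}_{\a+aj,\s}(x,y)\le C_\s\,(xy)^{-aj}\,\overline{\mathcal{K}}_{\a,\s}(x,y)$ with $C_\s$ independent of $j$. For this I would combine the relation \eqref{eq:relacion potential kernels}, i.e. $\mathcal{H}_{\a+aj,\s}(x,y)=\mathcal{H}_{\a+aj,\s}^H(x,y)\,(xy)^{-(\a+aj)-1/2}$, Meda's change of variable \eqref{eq:cambio Meda} in \eqref{eq:potential kernel convo}, and Proposition \ref{th:acotacion nucleo} applied with the multi-index $\a+aj$; the essential point is that the constant in Proposition \ref{th:acotacion nucleo} does not depend on the order, hence not on $j$. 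Inserting $G_{\a+aj,t(\xi)}^H(x,y)\le C(\tfrac{1-\xi^2}{\xi})^{1/2}\exp(-\tfrac{(x-y)^2}{4\xi}-\tfrac{\xi(x+y)^2}{4})$ into the Meda integral and using $dt=(1-\xi^2)^{-1}d\xi$ produces precisely $(xy)^{\a+1/2}$ times the integral defining $\overline{\mathcal{K}}_{\a,\s}$, and the surplus factors $(xy)^{\pm aj}$ cancel.

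For the integral bound \eqref{eq:boundoverkernel}, I would use Fubini's theorem (all integrands being nonnegative) to carry out the $y$-integration first, so that $\int_0^\infty\overline{\mathcal{K}}_{\a,\s}(x,y)\,d\mu_\a(y)$ equals $x^{-\a-1/2}$ times the $\xi$-integral whose inner factor is $\int_0^\infty y^{\a+1/2}\exp(-\tfrac{(x-y)^2}{4\xi}-\tfrac{\xi(x+y)^2}{4})\,dy$. The algebraic heart of the matter is to complete the square, which gives
\[
\frac{(x-y)^2}{4\xi}+\frac{\xi(x+y)^2}{4}=\frac{1+\xi^2}{4\xi}\,(y-y_0)^2+\frac{x^2\xi}{1+\xi^2},\qquad y_0=x\,\frac{1-\xi^2}{1+\xi^2}\in(0,x).
\]
Writing $y^{\a+1/2}\le C\bigl(y_0^{\a+1/2}+|y-y_0|^{\a+1/2}\bigr)$ (legitimate since $\a+1/2\ge0$), bounding $y_0\le x$, and evaluating the resulting Gaussian moments, one obtains
\[
\int_0^\infty y^{\a+1/2}\exp\Big(-\frac{(x-y)^2}{4\xi}-\frac{\xi(x+y)^2}{4}\Big)\,dy\le C\,e^{-x^2\xi/(1+\xi^2)}\bigl(x^{\a+1/2}\xi^{1/2}+\xi^{\a/2+3/4}\bigr);
\]
for $x>2$ and $\a\ge-1/2$ the second summand is absorbed by the first, since $\xi^{\a/2+3/4}\le\xi^{1/2}\le x^{\a+1/2}\xi^{1/2}$. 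Inserting this back, the factor $x^{\a+1/2}$ cancels the prefactor $x^{-\a-1/2}$ and a factor $\xi^{1/2}$ cancels the $\xi^{-1/2}$ inside the kernel, leaving
\[
\int_0^\infty \overline{\mathcal{K}}_{\a,\s}(x,y)\,d\mu_\a(y)\le C\int_0^1\Big(\log\frac{1+\xi}{1-\xi}\Big)^{\s-1}(1-\xi^2)^{-1/2}\exp\Big(-\frac{x^2\xi}{1+\xi^2}\Big)\,d\xi,\qquad x>2.
\]

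Finally, I would split this last integral at $\xi=1/2$. On $(1/2,1)$ one has $\xi/(1+\xi^2)\ge 2/5$, so the exponential is $\le e^{-2x^2/5}$ while the remaining integrand is integrable near $\xi=1$, giving a contribution $\le C e^{-2x^2/5}\le C_\s x^{-2\s}$ for $x>2$; on $(0,1/2)$ one has $\log\frac{1+\xi}{1-\xi}\simeq\xi$, $(1-\xi^2)^{-1/2}\simeq 1$ and $\xi/(1+\xi^2)\simeq\xi$, so this part is $\le C\int_0^\infty\xi^{\s-1}e^{-cx^2\xi}\,d\xi=C_\s x^{-2\s}$. Multiplying by $x^{2\s}$ then gives \eqref{eq:boundoverkernel}. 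I expect the main difficulty to lie in the handling of the $y$-integral for \eqref{eq:boundoverkernel}: one must keep both exponential factors in play at once — which is exactly what the completing-the-square identity accomplishes — in order to produce the decay $x^{-2\s}$, while simultaneously extracting precisely the power $x^{\a+1/2}$ needed to cancel the prefactor $(xy)^{-\a-1/2}$ present in $\overline{\mathcal{K}}_{\a,\s}$.
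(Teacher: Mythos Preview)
Your argument for \eqref{eq:overkernel} coincides with the paper's: both invoke the identity $(L_{\a+aj})^{-\s}=\mathcal{I}_{\a+aj,\s}$, write the potential kernel via Meda's change of variable, and apply the one-dimensional case of Proposition~\ref{th:acotacion nucleo}, whose constant is independent of the order; the measure rewriting $d\mu_{\a+aj}=u_j^2\,d\mu_\a$ makes the $(xy)^{\pm aj}$ factors cancel exactly as you say.

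For \eqref{eq:boundoverkernel} your route differs from the paper's. The paper splits the $y$-domain into $|x-y|>x/2$ and $|x-y|\le x/2$: in the far region it bounds $\overline{\mathcal{K}}_{\a,\s}$ by $(xy)^{-\a-1/2}e^{-c(x-y)^2}$ (via Proposition~\ref{th:estimacion potential kernel} with $n=1$) and then treats $y<x/2$ and $y>3x/2$ separately; in the near region it splits the $\xi$-integral at $1/2$, uses $e^{-cx^2}$ from the factor $e^{-\xi(x+y)^2/4}$ on $[1/2,1)$, and performs two changes of variable on $(0,1/2)$. You instead do the $y$-integration first, globally, by completing the square to get the exact identity $\frac{(x-y)^2}{4\xi}+\frac{\xi(x+y)^2}{4}=\frac{1+\xi^2}{4\xi}(y-y_0)^2+\frac{x^2\xi}{1+\xi^2}$ with $y_0=x\frac{1-\xi^2}{1+\xi^2}$; the splitting $y^{\a+1/2}\le C(y_0^{\a+1/2}+|y-y_0|^{\a+1/2})$ and the Gaussian moment computation are correct, and your absorption of the second summand into the first is valid for $\a\ge-1/2$, $x>2$, $\xi\in(0,1)$. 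The resulting $\xi$-integral is then handled by the same $\xi=1/2$ split as in the paper. Your approach avoids the regional case analysis in $y$ and is somewhat cleaner; the paper's approach, on the other hand, recycles estimates already proved earlier (in particular Proposition~\ref{th:estimacion potential kernel}) rather than introducing the completing-the-square identity.
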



The proof of the necessity of the condition \eqref{eq:cond22} in Theorem \ref{th:LpLq Lag convolution dim1} is an immediate consequence of Theorem 2.2 in \cite{NoSt-2} where the boundedness of $(L_\alpha)^{-\s}$ is characterized. The sufficiency of \eqref{eq:cond22} will be
obtained from the following result and the Riesz-Thorin interpolation theorem.

\begin{thm}
\label{thm:border}
Let $\a\ge -1/2$, $0<\s<\a+1$, $a\ge 1$, and $1\le p,q,s\le
\infty$. Then:
\begin{enumerate}
\item[a)] If $1-\frac{\s}{\a+1}< \frac{1}{q} \le 1$, there exists
a constant $C$ such that
$$
\Big\|\Big(\sum_{j=0}^\infty|u_j(L_{\a+aj})^{-\s}(u_j^{-1}f_j)|^s\Big)^{1/s}
\Big\|_{L^q((0,\infty),
d\mu_{\a})} \le
C\Big\|\Big(\sum_{j=0}^\infty|f_j|^s\Big)^{1/s}\Big\|_{L^1((0,\infty),
d\mu_{\a})}.
$$
\item[b)] If $\frac{1}{p} <\frac{\s}{\a+1}$, there exists a
constant $C$ such that
$$
\Big\|\Big(\sum_{j=0}^\infty|u_j(L_{\a+aj})^{-\s}(u_j^{-1}f_j)|^s\Big)^{1/s}
\Big\|_{L^\infty((0,\infty),
d\mu_{\a})} \le
C\Big\|\Big(\sum_{j=0}^\infty|f_j|^s\Big)^{1/s}\Big\|_{L^p((0,\infty),
d\mu_{\a})}.
$$
\item[c)] If $ \frac{1}{q} <\frac{\s}{\a+1}$, there exists a
constant $C$ such that
$$
\Big\|\Big(\sum_{j=0}^\infty|u_j(L_{\a+aj})^{-\s}(u_j^{-1}f_j)|^s\Big)^{1/s}
\Big\|_{L^q((0,\infty),
d\mu_{\a})} \le
C\Big\|\Big(\sum_{j=0}^\infty|f_j|^s\Big)^{1/s}\Big\|_{L^\infty((0,\infty),
d\mu_{\a})}.
$$
\item[d)] If $1-\frac{\s}{\a+1}< \frac{1}{p} \le 1$, there exists
a constant $C$ such that
$$
\Big\|\Big(\sum_{j=0}^\infty|u_j(L_{\a+aj})^{-\s}(u_j^{-1}f_j)|^s\Big)^{1/s}
\Big\|_{L^1((0,\infty),
d\mu_{\a})} \le
C\Big\|\Big(\sum_{j=0}^\infty|f_j|^s\Big)^{1/s}\Big\|_{L^p((0,\infty),
d\mu_{\a})}.
$$
\item[e)] If $p>1$, $q<\infty$ and $\frac{1}{p}-\frac{\s}{\a+1}=\frac{1}{q}$,
there exists a constant $C$ such that
$$
\Big\|\Big(\sum_{j=0}^\infty|u_j(L_{\a+aj})^{-\s}(u_j^{-1}f_j)|^s\Big)^{1/s}
\Big\|_{L^q((0,\infty),
d\mu_{\a})} \le
C\Big\|\Big(\sum_{j=0}^\infty|f_j|^s\Big)^{1/s}\Big\|_{L^p((0,\infty),
d\mu_{\a})}.
$$
\end{enumerate}
\end{thm}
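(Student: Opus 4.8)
The plan is to reduce the five vector-valued inequalities to scalar $L^p(d\mu_\a)\to L^q(d\mu_\a)$ estimates and then prove the latter by kernel analysis. For the reduction, note that the potential kernels are nonnegative, so $|u_j(L_{\a+aj})^{-\s}(u_j^{-1}f_j)|\le u_j(L_{\a+aj})^{-\s}(u_j^{-1}|f_j|)$, and by Propositions \ref{prop:mathK} and \ref{prop:bonto} the right-hand side is bounded, \emph{with a constant independent of $j$}, both by $C\int_0^\infty\mathcal K_{\a,\s}(\cdot,y)|f_j(y)|\,d\mu_\a(y)$ and by $C\int_0^\infty\overline{\mathcal K}_{\a,\s}(\cdot,y)|f_j(y)|\,d\mu_\a(y)$, hence by $C\,S(|f_j|)$, where $S$ is the positive symmetric operator on $(\R_+,d\mu_\a)$ whose kernel equals $\mathcal K_{\a,\s}$ on $\{x\le2,\,y\le2\}$ and $\overline{\mathcal K}_{\a,\s}$ on the complement. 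Consequently
\[
\Big(\sum_{j}\big|u_j(L_{\a+aj})^{-\s}(u_j^{-1}f_j)\big|^{r}\Big)^{1/r}\le C\,\Big(\sum_{j}\big(S|f_j|\big)^{r}\Big)^{1/r},
\]
and Lemma \ref{Lem:Marcin}, applied to the positive operator $S$, reduces everything to showing $S\colon L^p(d\mu_\a)\to L^q(d\mu_\a)$ for the pairs $(p,q)$ of items a)--e). This is exactly where the $j$-independence of the bounds of Propositions \ref{prop:mathK} and \ref{prop:bonto} is used: the constant in each of the five inequalities becomes the norm of a single scalar operator.

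For the scalar bounds, $S$ is self-adjoint on $L^2(d\mu_\a)$, so item a) is dual to item b) and item c) to item d); it therefore suffices to prove b), c), e). In each I would split according to whether the output variable $x$ is $>2$ or $\le2$. On $\{x>2\}$ the kernel of $S$ is $\overline{\mathcal K}_{\a,\s}$, and the decay estimate \eqref{eq:boundoverkernel}, $x^{2\s}\int_0^\infty\overline{\mathcal K}_{\a,\s}(x,y)\,d\mu_\a(y)\le C$, is decisive: for c) it gives $\int_0^\infty\overline{\mathcal K}_{\a,\s}(x,y)\,d\mu_\a(y)\lesssim x^{-2\s}$, which is $q$-integrable against $d\mu_\a$ over $(2,\infty)$ precisely when $q>\tfrac{\a+1}{\s}$ (and, dually, yields the condition $\tfrac1p>1-\tfrac{\s}{\a+1}$ of d)); for b) the same estimate, together with the near-diagonal size of $\overline{\mathcal K}_{\a,\s}$, bounds $\sup_{x>2}\|\overline{\mathcal K}_{\a,\s}(x,\cdot)\|_{L^{p'}(d\mu_\a)}$ with no restriction on $p$. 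On $\{x\le2\}$ the kernel is $\mathcal K_{\a,\s}$ (plus the exponentially small piece of $\overline{\mathcal K}_{\a,\s}$ coming from $\{y>2\}$, which is harmless), and I would split once more into the region where $x+y$ is bounded away from $0$, which is elementary, and the region near the origin, where $d\mu_\a$ behaves like a $(2\a+2)$-dimensional measure: there the homogeneity substitution $y=xt$ turns the relevant integrals into explicit Euler Beta-type integrals whose convergence as $t\to\infty$ produces exactly the remaining thresholds ($p>\tfrac{\a+1}{\s}$ for b), $q<\tfrac{\a+1}{\a+1-\s}$ for a)), and one checks---using $\a\ge-1/2$---that the near-diagonal singularity $|x-y|^{2\s-1}$ is harmless in these ranges.

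The delicate item is e), the borderline Sobolev estimate $\tfrac1p-\tfrac1q=\tfrac{\s}{\a+1}$ with $1<p\le q<\infty$, where the crude size estimates used for a)--d) lose at the critical exponent. Here the off-diagonal part of $S$ has Gaussian decay and is bounded for every $p\le q$, whereas the near-diagonal part is controlled by (a truncation of) the fractional integral of order $2\s$ for the measure $d\mu_\a$ on $\R_+$: indeed $\mathcal K_{\a,\s}(x,y)\lesssim|x-y|^{2\s}/\mu_\a(B(x,|x-y|))$ when $|x-y|<1$ and $\s\le1/2$, while a Hardy-type splitting near the origin handles $\s>1/2$. Thus e) comes down to the Hardy--Littlewood--Sobolev inequality for $d\mu_\a$, which one can prove by a dyadic decomposition around the origin and the classical one-dimensional inequality---the critical index $\tfrac{\s}{\a+1}$ being dictated by the $(2\a+2)$-dimensional behaviour of $d\mu_\a$ at $0$, while away from the origin the measure is one-dimensional and the same line is strictly subcritical, so no extra constraint appears---or quote it from the scalar theory in \cite{NoSt,NoSt-2}. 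I expect e) to be the main obstacle: it is the only item not deducible from the others and the only one that genuinely needs the HLS inequality for a measure that is not Ahlfors-regular. Once Theorem \ref{thm:border} is proved, the sufficiency of \eqref{eq:cond22} in Theorem \ref{th:LpLq Lag convolution dim1} follows from Riesz--Thorin interpolation, because a)--e) are precisely the attained edges of the region \eqref{eq:cond22}, the two excluded corner points being those at which strong-type boundedness fails.
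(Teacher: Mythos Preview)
Your approach matches the paper's: majorize $u_j(L_{\a+aj})^{-\s}(u_j^{-1}\cdot)$ by a $j$-independent positive kernel via Propositions \ref{prop:mathK} and \ref{prop:bonto}, reduce to scalar $L^p\to L^q$ bounds with Lemma \ref{Lem:Marcin}, handle a)--d) by Minkowski/H\"older together with the decay estimate \eqref{eq:boundoverkernel} (the paper proves the key bound $\|\mathcal K_{\a,\s}(\cdot,y)\|_{L^q(d\mu_\a)}\le C$ directly rather than arguing by duality, but your pairing a)$\leftrightarrow$b), c)$\leftrightarrow$d) via symmetry is a clean equivalent), and isolate e) as the genuinely critical case. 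For e) the paper carries out your ``Hardy-type splitting plus local fractional integral'' outline concretely by means of Theorems \ref{thm:Flett-1} (Flett's weighted Hardy inequality, for the pieces $y<x/2$ and $y>3x/2$) and \ref{thm:Flett-Duo} (local fractional integral on $\R_+$, for the piece $x/2<y<3x/2$); two small points are that for e) the paper uses the $\mathcal K_{\a,\s}$ bound globally rather than your mixed kernel $S$, and that it treats $j=0$ separately by quoting \cite{NoSt-2}, since Proposition \ref{prop:mathK} is stated only for $j\ge1$.
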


The proof of e) is a consequence of a classical result for the Hardy operator (see \cite[Theorem 1]{Flett}) and another result about the boundedness of local one-dimensional fractional integrals (see \cite[Corollary 5.4]{Duo-Frac}). These results are stated below.

\begin{thm}
\label{thm:Flett-1}
Let $f$ be a nonnegative function defined on $(0,\infty)$. Define $F(r)=\int_{0}^r f(t)\, dt$, if $\gamma>-1$, and $F(r)=\int_{r}^{\infty} f(t)\, dt$, if $\gamma<-1$. For $1\le p\le q$, $\gamma\not=-1$, we have
\[
\left(\int_{0}^{\infty}F(r)^q r^{-1-q(\gamma+1)}\, dr\right)^{1/q}
\le B(p,q,\gamma)
\left(\int_{0}^{\infty}f(r)^p r^{-1-p\gamma}\, dr\right)^{1/p}.
\]
\end{thm}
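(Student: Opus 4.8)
The plan is to derive this classical weighted Hardy inequality from Young's convolution inequality, after transporting the problem to the additive structure of the line by a logarithmic change of variables. I first treat the case $\gamma>-1$, where $F(x)=\int_0^x f(t)\,dt$, and then reduce the case $\gamma<-1$ to it by inversion.

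\emph{Case $\gamma>-1$.} One may assume the right-hand side is finite, so that $f\in L^1_{\mathrm{loc}}(0,\infty)$ and $F$ is well defined. First, peel off the weights: writing $f(t)=g(t)\,t^{\gamma+1/p}$ gives $\int_0^\infty f(x)^p x^{-1-p\gamma}\,dx=\int_0^\infty g(t)^p\,dt$ and $F(x)=\int_0^x g(t)\,t^{\gamma+1/p}\,dt$. Next substitute $x=e^u$, $t=e^v$ and set $G(v)=g(e^v)\,e^{v/p}$, so that $\int_0^\infty g(t)^p\,dt=\int_{\R}G(v)^p\,dv$. Since $F(e^u)=\int_{-\infty}^u g(e^v)e^{v(\gamma+1+1/p)}\,dv=\int_{-\infty}^u G(v)e^{v(\gamma+1)}\,dv$, a short computation yields
\[
F(e^u)\,e^{-u(\gamma+1)}=\int_{-\infty}^{u}G(v)\,e^{-(u-v)(\gamma+1)}\,dv=(G*\kappa)(u),
\]
where $\kappa(w)=e^{-w(\gamma+1)}$ for $w\ge 0$ and $\kappa(w)=0$ for $w<0$. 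Moreover, since $x^{-1-q(\gamma+1)}\,dx=\bigl(x^{-(\gamma+1)}\bigr)^q\,dx/x$ and $dx/x$ becomes $du$, we obtain
\[
\Bigl(\int_0^\infty F(x)^q x^{-1-q(\gamma+1)}\,dx\Bigr)^{1/q}=\norm{G*\kappa}_{L^q(\R)},
\]
with the usual interpretation when $q=\infty$.

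Now apply Young's inequality: $\norm{G*\kappa}_{L^q}\le\norm{\kappa}_{L^r}\,\norm{G}_{L^p}$ with $\frac1r=1-\frac1p+\frac1q$. This is exactly where both hypotheses enter. The condition $1\le p\le q$ is precisely what forces $1\le r\le\infty$, so that Young's inequality is applicable; and the condition $\gamma\neq-1$, here in the guise $\gamma+1>0$, is precisely what makes $\kappa$ decay exponentially, so that $\kappa\in L^r$ for every $r$, with $\norm{\kappa}_{L^r}=\bigl(r(\gamma+1)\bigr)^{-1/r}$. Since $\norm{G}_{L^p}=\bigl(\int_0^\infty f(x)^p x^{-1-p\gamma}\,dx\bigr)^{1/p}$, the two displays above give the claimed inequality with $B(p,q,\gamma)=\bigl(r(\gamma+1)\bigr)^{-1/r}$.

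\emph{Case $\gamma<-1$.} Set $\tilde\gamma=-\gamma-2>-1$ and $\tilde f(s)=f(1/s)\,s^{-2}$. Then $\int_0^u\tilde f=\int_{1/u}^\infty f=F(1/u)$, and the substitution $x=1/u$ in both sides of the desired inequality for $(f,\gamma)$ turns it into the inequality for $(\tilde f,\tilde\gamma)$, already established. This explains why the statement uses the two definitions of $F$: they are interchanged by the inversion $x\mapsto1/x$, which sends $\gamma+1$ to $-(\gamma+1)$. I do not expect a genuine obstacle in this argument; the only things requiring care are the choice of the power $t^{\gamma+1/p}$ to extract — so that the Hardy operator becomes a convolution with an $L^1_{\mathrm{loc}}$, exponentially decaying kernel — and keeping the two ranges of $\gamma$ straight.
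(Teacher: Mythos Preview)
Your argument is correct. The logarithmic substitution turns the weighted Hardy operator into convolution with the one-sided exponential kernel $\kappa$, and the hypotheses $1\le p\le q$ and $\gamma+1>0$ are exactly what make Young's inequality applicable with $\kappa\in L^r$; the inversion $x\mapsto 1/x$ cleanly handles $\gamma<-1$. The only cosmetic point is that the remark ``so that $f\in L^1_{\mathrm{loc}}$ and $F$ is well defined'' is unnecessary: for nonnegative $f$ the integral $F(x)$ and the convolution $G*\kappa$ make sense as $[0,\infty]$-valued functions, and Young's inequality holds in that setting without any a priori finiteness.

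As for comparison with the paper: there is nothing to compare. The paper does not prove this theorem; it merely states it as a classical Hardy-type inequality and cites \cite[Theorem~1]{Flett} for the proof. Your convolution/Young approach is one of the standard routes to such inequalities (an alternative is the direct integral-splitting method with Minkowski's inequality, closer to Flett's original), and it has the virtue of producing an explicit constant $B(p,q,\gamma)=\bigl(r(\gamma+1)\bigr)^{-1/r}$ with $\tfrac1r=1-\tfrac1p+\tfrac1q$.
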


\begin{thm}
\label{thm:Flett-Duo}
Let $f$ be a nonnegative function defined on $(0,\infty)$. Let $0<\beta<1$ and consider the local fractional integral in $(0,\infty)$:
\[
L_\beta^{\operatorname{loc}} f(r)=\int_{r/2}^{3r/2} \frac{f(t)}{|r-t|^{1-\beta}}\, dt.
\]
Then
\[
\left(\int_{0}^{\infty}L_\beta^{\operatorname{loc}} f(r)^q r^{a}\, dr\right)^{1/q}
\le C
\left(\int_{0}^{\infty}f(r)^p r^{b}\, dr\right)^{1/p}
\]
holds for $1\le p \le q<\infty$ if and only if $(b+1)/p-(a+1)/q=\beta$ and either $1/p-1/q\le \beta$ for $p> 1$, or $1/p-1/q<\beta$ for $p=1$.
\end{thm}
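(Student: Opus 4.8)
The plan is to reduce this weighted estimate on the half-line to an \emph{unweighted} estimate on $\R$ for a truncated Riesz potential, by means of the exponential substitution $x=e^u$, $t=e^v$.

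First I would record the elementary comparison $|e^u-e^v|=e^u|1-e^{v-u}|\simeq e^u|u-v|$, valid with absolute constants whenever $v-u$ lies in the bounded interval $(-\log 2,\log(3/2))$, since $|1-e^w|\simeq|w|$ on that compact set. Consequently, writing $g(v)=f(e^v)e^v$ and introducing the truncated Riesz potential on $\R$,
\[
\widetilde I_\beta g(u)=\int_{u-\log 2}^{u+\log(3/2)}|u-v|^{\beta-1}g(v)\,dv,
\]
whose convolution kernel $|\cdot|^{\beta-1}$ is supported in $[-\log 2,\log(3/2)]\subset[-1,1]$, the change of variable $t=e^v$ in the definition of $L_\beta^{\text{loc}}$ gives the two-sided estimate $L_\beta^{\text{loc}}f(e^u)\simeq e^{u(\beta-1)}\widetilde I_\beta g(u)$ (legitimate because $f\ge 0$). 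Changing variables in the two Lebesgue integrals then yields
\[
\int_0^\infty L_\beta^{\text{loc}}f(x)^q x^a\,dx\simeq\int_{\R}\widetilde I_\beta g(u)^q e^{u\gamma_1}\,du,\qquad
\int_0^\infty f(x)^p x^b\,dx=\int_{\R}g(v)^p e^{v\gamma_2}\,dv,
\]
with $\gamma_1=a+1-q+q\beta$ and $\gamma_2=b+1-p$.

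Next I would transfer the exponential weights through $\widetilde I_\beta$. Since its kernel is supported in $[-1,1]$, the weights $e^{u\gamma_1/q}$ and $e^{v\gamma_1/q}$ are comparable on the region where the kernel does not vanish, so $\Big(\int_{\R}\widetilde I_\beta g(u)^q e^{u\gamma_1}\,du\Big)^{1/q}\simeq\|\widetilde I_\beta h\|_{L^q(\R)}$ with $h(v)=g(v)e^{v\gamma_1/q}$, while $\|h\|_{L^p(\R)}^p=\int_{\R}g(v)^p e^{vp\gamma_1/q}\,dv$. The latter coincides with $\int_{\R}g(v)^p e^{v\gamma_2}\,dv$ exactly when $p\gamma_1/q=\gamma_2$, i.e.\ when $(b+1)/p-(a+1)/q=\beta$; and a standard dilation argument (replace $f$ by $f(\lambda\,\cdot)$ and use $L_\beta^{\text{loc}}(f(\lambda\,\cdot))(x)=\lambda^{-\beta}(L_\beta^{\text{loc}}f)(\lambda x)$) shows this relation is necessary for the asserted inequality. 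Assuming it, the maps $f\mapsto g\mapsto h$ are bijections of $L^p(\R)$, and the theorem is reduced to the \emph{unweighted} statement $\|\widetilde I_\beta h\|_{L^q(\R)}\le C\|h\|_{L^p(\R)}$ for $1\le p\le q<\infty$.

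Finally I would establish this last inequality by the truncated Hardy--Littlewood--Sobolev theorem on the line. If $1/p-1/q<\beta$, then the kernel $|\cdot|^{\beta-1}\mathbf{1}_{[-1,1]}$ belongs to $L^r(\R)$ for the exponent $r$ with $1/r=1+1/q-1/p$, because $1/r>1-\beta$ is exactly $1/p-1/q<\beta$, so Young's convolution inequality applies (for all $1\le p\le q<\infty$, including $p=1$). If $1/p-1/q=\beta$ and $p>1$, the kernel lies in weak-$L^{1/(1-\beta)}(\R)$ and the weak Young / Hardy--Littlewood--Sobolev inequality gives the bound; this fails at $p=1$ by the classical example. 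The necessity of $1/p-1/q\le\beta$ for $p>1$, and of the strict inequality for $p=1$, follows by testing $\widetilde I_\beta$ on $h=\mathbf{1}_{[0,\delta]}$ and letting $\delta\to 0^+$. I do not expect a genuine obstacle: the only points requiring a little care are the uniformity of the comparison constants in $|e^u-e^v|\simeq e^u|u-v|$ and the quantitative harmlessness of the weight transfer through the local operator, both routine. What this argument makes transparent is that the dimensional condition $(b+1)/p-(a+1)/q=\beta$ and the shape condition $1/p-1/q\le\beta$ are precisely the two constraints inherited, respectively, from matching the weights and from the unweighted truncated Hardy--Littlewood--Sobolev inequality.
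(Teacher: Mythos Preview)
The paper does not actually prove this theorem: it is quoted verbatim as \cite[Corollary 5.4]{Duo-Frac} and used as a black box in the proof of part e) of Theorem \ref{thm:border}. So there is no ``paper's own proof'' to compare against.

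Your argument is correct and self-contained. The exponential change of variable $x=e^u$ is the classical device for turning power weights on $(0,\infty)$ into (ultimately removable) exponential weights on $\R$, and the locality of the kernel (support in $[-\log 2,\log(3/2)]$) is precisely what makes both comparisons---$|e^u-e^v|\simeq e^u|u-v|$ and the weight transfer $e^{u\gamma_1/q}\simeq e^{v\gamma_1/q}$---uniform. The identification $p\gamma_1/q=\gamma_2\Longleftrightarrow (b+1)/p-(a+1)/q=\beta$ is exactly right, and the scaling argument for its necessity is clean. The remaining unweighted estimate $\|\widetilde I_\beta h\|_{L^q(\R)}\le C\|h\|_{L^p(\R)}$ is handled correctly: Young when $1/p-1/q<\beta$ (kernel in $L^r$ with $1/r=1+1/q-1/p>1-\beta$), and weak Young / Hardy--Littlewood--Sobolev at the endpoint $1/p-1/q=\beta$, $p>1$. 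The necessity test with $h=\mathbf 1_{[0,\delta]}$ and the classical failure at $p=1$, $q=1/(1-\beta)$ close the ``only if'' direction. One cosmetic point: the constants in your two comparisons depend on $\gamma_1/q$ (hence on $a,q,\beta$), which is harmless since the statement allows $C=C(a,b,p,q,\beta)$; you might say so explicitly.
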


\begin{proof}[Proof of Theorem \ref{thm:border}]
As in the proof of Theorem \ref{th:LpLq Lag Hermite}, it can be
checked easily that $(L_\a)^{-\s}=\mathcal{I}_{\a,\s}$ so we omit
the details.

In the five parts of the proof we will analyze $u_j(L_{\a+aj})^{-\s}(u_j^{-1}f_j)$ for $j\ge 1$. The case $j=0$ can be deduced from Theorem 2.2 in \cite{NoSt-2}.

By \eqref{eq:fractional operator convo} and Proposition
\ref{prop:mathK}, it is clear that
\[
u_j(r)(L_{\a+aj})^{-\s}(u_j^{-1}(\cdot)f_j)(r)\le C \int_0^{\infty}
f_j(s)\mathcal{K}_{\a,\s}(r,s)\, d\mu_\a(s),
\]
where $\mathcal{K}_{\a,\s}$ is that in \eqref{eq:kernelconvo}.

\textit{Proof of a)} The inequality in a) will be deduced from
the estimate
\[
\left\|\int_0^{\infty} f(s)\mathcal{K}_{\a,\s}(r,s)\,
d\mu_\a(s)\right\|_{L^q((0,\infty),d\mu_\a)}\le C \|f\|_{L^1((0,\infty),d\mu_\a)}
\]
and Lemma \ref{Lem:Marcin}. Applying Minkowski's inequality, the previous inequality is a
consequence of the estimate
\begin{equation}
\label{ec:bounda}
\left\|\mathcal{K}_{\a,\s}(\cdot,s)\right\|_{L^q((0,\infty),d\mu_\a)}\le C, \qquad 1\le q <\infty.
\end{equation}
To prove the previous bound, we consider the cases $|r-s|<1$ and
$|r-s|\ge 1$. For $|r-s|<1$ we distinguish three cases:
\begin{enumerate}
\item[Case $\s<1/2$.] For $s\ge 1$, we have
\[
\int_{|r-s|<1}(\mathcal{K}_{\a,\s}(r,s))^q\, d\mu_\a(r)\le C
s^{(2\a+2)(1-q)+2q\s}\int_{|t-1|<1/s} |1-t|^{-q(1-2\s)}\,dt\sim
s^{(2\a+1)(1-q)}\le C.
\]
When $s<1$, it is verified that
\begin{align*}
\int_{|r-s|<1}(\mathcal{K}_{\a,\s}(r,s))^q\, d\mu_\a(r)&\le C
s^{(2\a+2)(1-q)+2q\s}\int_{0}^{1+1/s}
\frac{t^{2\a+1}}{|1-t|^{q(1-2\s)}(1+t)^{q(2\a+1)}}\,dt
\\&\sim s^{(2\a+2)(1-q)+2q\s}\left(
\int_{0}^{2}
|1-t|^{-q(1-2\s)}\,dt+\int_{2}^{1+1/s}
t^{2\a+1-q(2\a+2-2\s)}\,dt\right)
\\
&\sim
s^{(2\a+2)(1-q)+2q\s}+1\le C,
\end{align*}
where in the last step we used that
$1-\frac{\s}{\a+1}<\frac{1}{q}$.

\item[Case $\s=1/2$.] Observe that in this case $\a$ cannot be $-1/2$, due to the general assumptions in the theorem involving $\s$ and $\a$. For $s\ge 1$, we have
\[
\int_{|r-s|<1}(\mathcal{K}_{\a,\s}(r,s))^q\, d\mu_\a(r)\le C
s^{(2\a+2)(1-q)+q}\int_{|t-1|<1/s} \left[\log\Big(\frac{t+1}{|t-1|}\Big)\right]^{q}\,dt.
\]
And the last integral can be controlled as follows
\[
\int_{|t-1|<1/s} \left[\log\Big(\frac{t+1}{|t-1|}\Big)\right]^{q}\,dt\sim \int_{|t-1|<1/s}(-\log|t-1|)^q\, dt\sim \int_{\log s}^\infty u^qe^{-u}\, du \le C.
\]
For $s<1$, it holds that
\begin{align*}
\int_{|r-s|<1}(\mathcal{K}_{\a,\s}(r,s))^q\, d\mu_\a(r)&\le C
s^{(2\a+2)(1-q)+q}\int_{0}^{1+1/s}
\frac{t^{2\a+1}}{(1+t)^{q(2\a+1)}}\left[\log\Big(\frac{t+1}{|t-1|}\Big)\right]^{q}\,dt
\\&\sim
s^{(2\a+2)(1-q)+q}+1\le C,
\end{align*}
where the condition $1-\frac{1}{2\a+2}<\frac{1}{q}$ is used.

\item[Case $\s>1/2$.] For $s\ge 1$, we have
\[
\int_{|r-s|<1}(\mathcal{K}_{\a,\s}(r,s))^q\, d\mu_\a(r)\le C
s^{(2\a+2)(1-q)+q}\int_{|t-1|<1/s}\,dt\sim
s^{(2\a+1)(1-q)}\le C.
\]
When $s<1$, it is verified that
\begin{align*}
\int_{|r-s|<1}(\mathcal{K}_{\a,\s}(r,s))^q\, d\mu_\a(r)&\le C
s^{(2\a+2)(1-q)+2q\s}\int_{0}^{1+1/s}
\frac{t^{2\a+1}}{(1+t)^{q(2\a+2-2\s)}}\,dt\\&\sim
s^{(2\a+2)(1-q)+2q\s}+1\le C,
\end{align*}
where we use again the condition $1-\frac{\s}{\a+1}<\frac{1}{q}$.
\end{enumerate}

We pass to analyze \eqref{ec:bounda} for $|r-s|\ge 1$. We have
\[
\int_{|r-s|\ge 1}(\mathcal{K}_{\a,\s}(r,s))^q\, d\mu_\a(r)\le C
\int_{|r-s|\ge 1}\frac{e^{-c(r-s)^2}}{(r+s)^{q(2\a+1)}}\,
d\mu_\a(r).
\]
In this region, the inequality $r^{2\a+1}(r+s)^{-q(2\a+1)}\le C$ holds, then
\[
\int_{|r-s|\ge 1}(\mathcal{K}_{\a,\s}(r,s))^q\, d\mu_\a(r)\le C
\int_{|r-s|\ge 1} e^{-c(r-s)^2}\, dr\le C,
\]
and the proof of \eqref{ec:bounda} is completed.

\textit{Proof of b).} By using an argument analogous to a), it is enough to
prove that
\[
 \left\|\int_0^{\infty} f(s)\mathcal{K}_{\a,\s}(r,s)\,
d\mu_\a(s)\right\|_{L^\infty((0,\infty),d\mu_\a)}\le C \|f\|_{L^p((0,\infty),d\mu_\a)}.
\]
Now, by H\"{o}lder's inequality, the result will follow from \eqref{ec:bounda} and the symmetry of the kernel
$\mathcal{K}_{\a,\s}$, by using the condition
$\frac{1}{p}<\frac{\s}{\a+1}$.

\textit{Proof of c).} We consider $r\in(0,2)$ and $r\ge 2$ separately. In the
first case, by Proposition \ref{prop:mathK} and Lemma \ref{Lem:Marcin}, the inequality is reduced to prove
\[
 \left\|\chi_{(0,2)}(r)\int_0^{\infty} f(s)\mathcal{K}_{\a,\s}(r,s)\,
d\mu_\a(s)\right\|_{L^q((0,\infty),d\mu_\a)}\le C \|f\|_{L^\infty((0,\infty),d\mu_\a)}.
\]
Now, taking into account that
\[
\int_0^{\infty} f(y)\mathcal{K}_{\a,\s}(r,s)\, d\mu_\a(s)\le
\|f\|_{L^\infty((0,\infty),d\mu_\a)}\int_0^{\infty}\mathcal{K}_{\a,\s}(r,s)\,
d\mu_\a(s),
\]
we will conclude by showing that
\[
\left\|\chi_{(0,2)}(r)\int_0^{\infty} \mathcal{K}_{\a,\s}(r,s)\,
d\mu_\a(s)\right\|_{L^q((0,\infty),d\mu_\a)}\le C,
\]
but this is true by \eqref{ec:bounda} with $q=1$.

When $r\ge 2$, by \eqref{eq:overkernel} and Lemma
\ref{Lem:Marcin}, it will be enough to prove that
\[
 \left\|\chi_{[2,\infty)}(r)\int_0^{\infty} f(s)\overline{\mathcal{K}}_{\a,\s}(r,s)\,
d\mu_\a(s)\right\|_{L^q((0,\infty),d\mu_\a)}\le C \|f\|_{L^\infty((0,\infty),d\mu_\a)}.
\]
By applying \eqref{eq:boundoverkernel}, we obtain that
\begin{align*}
\int_2^{\infty}\left(\int_0^{\infty}
f(s)\overline{\mathcal{K}}_{\a,\s}(r,s)\, d\mu_\a(s)\right)^q\,
d\mu_\a(r)&\le \|f\|^q_{L^\infty((0,\infty),d\mu_\a)}
\int_2^{\infty}\left(\int_0^{\infty}
\overline{\mathcal{K}}_{\a,\s}(r,s)\, d\mu_\a(s)\right)^q\,
d\mu_\a(r)\\&\le C\|f\|^q_{L^\infty((0,\infty),d\mu_\a)} \int_2^{\infty}
r^{2\a+1-2\s q}\, dr \le C \|f\|^q_{L^\infty((0,\infty),d\mu_\a)},
\end{align*}
where in the last step we have used the restriction $ \frac{1}{q}
<\frac{\s}{\a+1}$.

\textit{Proof of d).} We distinguish between $s\in
(0,2)$ and $s\ge 2$. In the first case, by
Proposition \ref{prop:mathK} and Lemma \ref{Lem:Marcin}, the
inequality is reduced to prove
\[
 \left\|\int_0^{2} f(s)\mathcal{K}_{\a,\s}(r,s)\,
d\mu_\a(s)\right\|_{L^1((0,\infty),d\mu_\a)}\le C \|f\|_{L^p((0,\infty),d\mu_\a)}.
\]
By Fubini's theorem and H\"{o}lder's inequality, we have
\begin{align*}
\int_0^\infty\int_0^{2} f(s)\mathcal{K}_{\a,\s}(r,s)\,
d\mu_\a(s)\, d\mu_\a(r)&\le \|f\|_{L^p((0,\infty),d\mu_\a)}\int_0^2
\|\mathcal{K}_{\a,\s}(\cdot,s)\|_{L^{p'}((0,\infty),d\mu_\a)}\, d\mu_\a(s)\\&\le
C \|f\|_{L^p((0,\infty),d\mu_\a)},
\end{align*}
where in the last step we used \eqref{ec:bounda}.

In the case $s\ge 2$, by \eqref{eq:overkernel} and Lemma
\ref{Lem:Marcin}, it will be enough to prove that
\[
 \left\|\int_2^{\infty} f(s)\overline{\mathcal{K}}_{\a,\s}(r,s)\,
d\mu_\a(s)\right\|_{L^1((0,\infty),d\mu_\a)}\le C \|f\|_{L^p((0,\infty),d\mu_\a)}.
\]
Applying Fubini's theorem, H\"{o}lder's inequality and
\eqref{eq:boundoverkernel}, we obtain 
\begin{align*}
\int_0^{\infty}\int_2^{\infty}
f(s)\overline{\mathcal{K}}_{\a,\s}(r,s)\, d\mu_\a(s)\,
d\mu_\a(r)&\le \|f\|_{L^p((0,\infty),d\mu_\a)}
\left\|\chi_{(2,\infty)}(s)\int_0^{\infty}
\overline{\mathcal{K}}_{\a,\s}(r,s)\,
d\mu_\a(r)\right\|_{L^{p'}((0,\infty),d\mu_\a)}\\&\le C\|f\|_{L^p((0,\infty),d\mu_\a)}
\left\|\chi_{(2,\infty)}(s)s^{-2\s}\right\|_{L^{p'}((0,\infty),d\mu_\a)}\le C
\|f\|_{L^p((0,\infty),d\mu_\a)},
\end{align*}
where in the last step we used that $1-\frac{\s}{\a+1}<
\frac{1}{p} \le 1$.

\textit{Proof of e).} By using Proposition \ref{prop:mathK} we have to distinguish three cases in terms of $\s$.
\begin{enumerate}
\item[Case $\sigma <1/2$.] In this case
\begin{equation*}
u_j(r)(L_{\a+aj})^{-\s}(u_j^{-1}(\cdot)f_j)(r)\le C (I_1f(r)+I_2f(r)+I_3f(r)),
\end{equation*}
where
\[
I_1f(r)=\chi_{[0,2]}(r)r^{-2(\alpha+1-\s)}\int_{0}^{r/2}f(s)\, d\mu_\a(s)+
\chi_{[0,2]}(r)\int_{3r/2}^{2}s^{2\s-1}f(s)\, ds,
\]
$I_2f(r)=L_{2\s}^{\operatorname{loc}}f(r)$, and
\[
I_3f(r)=\int_{|r-s|\ge 1}\frac{e^{-c(r-s)^2}}{(r+s)^{2\a+1}}f(s) \, d\mu_\a(s).
\]
For $I_1$ the required bound follows from Theorem \ref{thm:Flett-1}, taking into account also the condition $\frac{1}{p}-\frac{\s}{\a+1}=\frac{1}{q}$. Indeed, for the first summand we are applying Theorem \ref{thm:Flett-1} with $\gamma=2(\a+1-\s)-\frac{2\a+2}{q}-1$, and for the second summand we apply Theorem \ref{thm:Flett-1} with $\gamma=-\frac{2\a+2}{q}-1$. Concerning $I_2$, the estimate is an immediate consequence of Theorem
\ref{thm:Flett-Duo}. For $I_3$, we can prove the boundedness for the pairs $(1/q,1/p)=(1-\s/(\a+1),1)$ and $(1/q,1/p)=(0,\s/(\a+1))$ as we did in a) and c). Indeed, the result will follow by applying Minkowski's or H\"{o}lder's inequalities and taking into account that
\[
\int_{|r-s|\ge 1}\frac{e^{-c(r-s)^2}}{(r+s)^{u(2\a+1)}} \, d\mu_\a(s)\le C,
\]
for any $u\ge 1$. The complete result is obtained by interpolation.

\item[Case $\sigma=1/2$.] Now, it is verified that $1/p-1/q=1/{2(\alpha+1)}=:\delta<1$. Then (since $-\log t \le C t^{1-\delta}$, for $0<t<C<1$)
\begin{equation*}
u_j(r)(L_{\a+aj})^{-\s}(u_j^{-1}(\cdot)f_j)(r)\le C (M_1f(r)+M_2f(r)+I_3f(r)),
\end{equation*}
where
\[
M_1f(r)=\chi_{[0,2]}(r)r^{-2(\alpha+1)}\int_{0}^{r}f(s)\, d\mu_\a(s)+
\chi_{[0,2]}(r)\int_{r}^{2}s^{2\s-1}f(s)\, ds,
\]
$M_2f(r)=L_{\delta}^{\operatorname{loc}}f(r)$, and $I_3$ is as in the previous case. In order to obtain the estimate we proceed analogously as for $\s<1/2$ by applying Theorem \ref{thm:Flett-1} and Theorem \ref{thm:Flett-Duo}.

\item[Case $\sigma >1/2$.] Now, we have
\begin{align*}
u_j(r)(L_{\a+aj})^{-\s}(u_j^{-1}(\cdot)f_j)(r)&\le C (J_1f(r)+J_2f(r)),
\end{align*}
where
\[
J_1f(r)=\chi_{[0,2]}(r)r^{-2(\alpha+1-\s)}\int_{0}^{r}f(s)\, d\mu_\a(s)+
\chi_{[0,2]}(r)\int_{r}^{2}s^{2\s-1}f(s)\, ds,
\]
and
\[
J_2f(r)=\int_{r+s\ge 1}\frac{e^{-c(r-s)^2}}{(r+s)^{2\a+1}}f(s) \, d\mu_\a(s).
\]
The operator $J_1$ is covered by the argument for $I_1$ in the case $\s<1/2$ and for $J_2$ we proceed analogously as for $I_3$ in the case $\s<1/2$.
\end{enumerate}
\end{proof}
\section{Proofs of technical results}
\label{section: technical}

\begin{proof}[Proof of Lemma \ref{lem:intxi}] First, observe that
\begin{equation}
\label{eq:est-log}
\log\left(\frac{1+\xi}{1-\xi}\right)\sim
\begin{cases}
\xi, &\text{for $0<\xi\le 1/2$,}\\
-\log(1-\xi^2), &\text{for $1/2<\xi< 1$.}
\end{cases}
\end{equation}
Then, denoting by $J$ the
integral to be estimated, we have
\begin{align*}
J&\le C\int_{0}^{1/2}
\xi^{\s-c-\ell-1}\exp\left(-\frac{a}{4\xi}\right)\, d\xi+ C
\int_{1/2}^1 (-\log(1-\xi^2))^{\s-1}(1-\xi^2)^c
\xi^{-c-\ell}\exp\left(-\frac{a}{4\xi}\right)\,d\xi\\&=:J_1+J_2.
\end{align*}
Now, for $J_1$, the
change of variable $s=\frac{a}{4\xi}$ produces the required
bound. Indeed,
\[
J_1=\frac{4^{c+\ell-\s}}{a^{c+\ell-\s}}\int_{\frac{a}2}^{\infty}e^{-s}s^{c+\ell-\s-1}\,
ds\le C \frac{4^{c}\Gamma(c+\ell-\s)}{a^{c+\ell-\s}}.
\]
In order to control $J_2$, we start by using the estimate
\[
t^\gamma e^{-t}\le \gamma^{\gamma}e^{-\gamma},\qquad t,\gamma>0,
\]
to deduce that
\[
\xi^{-c-\ell}\exp\left(-\frac{a}{4\xi}\right)\le
\frac{4^{c+\ell-\s}}{a^{c+\ell-\s}}\xi^{-\s}(c+\ell-\s)^{c+\ell-\s}e^{-(c+\ell-\s)}.
\]
Then,
\begin{align*}
J_2 &\le
\frac{4^{c+\ell-\s}}{a^{c+\ell-\s}}(c+\ell-\s)^{c+\ell-\s}e^{-(c+\ell-\s)}\int_{1/2}^1
(-\log(1-\xi^2))^{\s-1}(1-\xi^2)^c \xi^{-\s}\,d\xi\\&\le C
\frac{4^{c+\ell-\s}}{a^{c+\ell-\s}}(c+\ell-\s)^{c+\ell-\s}e^{-(c+\ell-\s)}\int_{1/2}^1
(-\log(1-\xi^2))^{\s-1}(1-\xi^2)^c \xi\,d\xi.
\end{align*}
Now, after making the change of variable $1-\xi^2=e^{-t}$ in the second
inequality below,
\begin{align*}
\int_{1/2}^1 (-\log(1-\xi^2))^{\s-1}(1-\xi^2)^c \xi\,d\xi&\le C
\int_{1/2}^1 (-\log(1-\xi^2))^{\s-1/2}(1-\xi^2)^c \xi\,d\xi\\&\le
\frac{C}{(c+1)^{\s+1/2}}\le \frac{C}{(c+1)^{1/2}}.
\end{align*} 
Therefore
\[
J_2\le
C\frac{4^{c+\ell-\s}}{a^{c+\ell-\s}}(c+\ell-\s)^{c+\ell-\s-1/2}e^{-(c+\ell-\s)}.
\]
Finally, by Stirling's approximation, we conclude the bound for
$J_2$.
\end{proof}

\begin{proof}[Proof of Lemma \ref{lem:intlambda}] With the obvious bound
\[
\left(\frac{1-s}{A-Bs}\right)^{\a+1/2}\le \frac{1}{A^{\a+1/2}}
\]
we have
\[
I_{\alpha,b}^\lambda \le \frac{1}{A^{\a+1/2}} \int_0^1
\frac{(1-s)^{b-1}}{(A-Bs)^{b+\lambda}}\, ds.
\]
Then, the change of variable $1-s=\frac{A-B}{B}z$ gives
\begin{equation*}
I_{\alpha,b}^\lambda\le \frac{1}{A^{\a+1/2}}
\frac{1}{B^b}\frac{1}{(A-B)^\lambda}\int_0^{\frac{B}{A-B}}\frac{z^{b-1}}
{(1+z)^{b+\lambda}}\,dz.
\end{equation*}
Now, for $\lambda>0$,
\begin{equation*}
\int_0^{\frac{B}{A-B}}\frac{z^{b-1}}{(1+z)^{b+\lambda}}\,dz\le \int_0^{\infty}\frac{z^{b-1}}{(1+z)^{b+\lambda}}\,dz = \frac{\Gamma(b)\Gamma(\lambda)}{\Gamma(b+\lambda)}.
\end{equation*}
In the case $\lambda=0$, we have
\begin{equation*}
\int_0^{\frac{B}{A-B}}\frac{z^{b-1}}{(1+z)^{b}}\,dz\le \int_0^{\frac{B}{A-B}}\frac{1}{(1+z)}\,dz = \log\left(\frac{A}{A-B}\right).
\end{equation*}
Finally, for $\lambda<0$,
\begin{equation*}
\int_0^{\frac{B}{A-B}}\frac{z^{b-1}}{(1+z)^{b+\lambda}}\,dz\le C\int_0^{\frac{A}{A-B}}z^{-\lambda-1}\,dz \sim \left(\frac{A-B}{A}\right)^{\lambda}.
\end{equation*}
\end{proof}

\begin{proof}[Proof of Lemma \ref{lem:kernelexp}] Due to the relation \eqref{eq:relacion heat
kernels}, it suffices to analyze the integral $J$ appearing in the
proof of Proposition \ref{th:acotacion nucleo} with $\a+aj$
instead of $\a_i$ and in one dimension. So the integral to be analyzed is (we write here the letter $v$ for the variable of integration, for convenience)
$$
J:=\int_{-1}^1\exp\Big(-\frac{q_{+}(r,s,v)}{4\xi} -\frac{\xi
q_{-}(r,s,v)}{4}\Big)(1-v^2)^{\a+aj-1/2}\,dv.
$$
After the
change of variable $v=2u-1$ the integral becomes
\begin{align*}
J=4^{\alpha+aj}\exp\left(-\frac{(r-s)^2}{4\xi}-\frac{\xi(r+s)^2}{4}\right)\int_0^1
\exp\left(-rsu\left(\frac1{\xi}-\xi\right)\right)u^{\a+aj-1/2}(1-u)^{\a+aj-1/2}\,
du.
\end{align*}
Now, it is easy to check that
\begin{align*}
J&\le
4^{\alpha+aj+1/2}\exp\left(-\frac{(r-s)^2}{4\xi}-\frac{\xi(r+s)^2}{4}\right)\int_0^{1/2}
\exp\left(-rsu\left(\frac1{\xi}-\xi\right)\right)u^{\a+aj-1/2}(1-u)^{\a+aj-1/2}\,
du\\& \le C
4^{\alpha+aj+1/2}\exp\left(-\frac{(r-s)^2}{4\xi}-\frac{\xi(r+s)^2}{4}\right)\int_0^{1/2}
\exp\left(-rsu\left(\frac1{\xi}-\xi\right)\right)u^{\a+aj-1/2}\,
du.
\end{align*}
Since $aj\ge 1$, the change of variable
$rsu\left(\frac1{\xi}-\xi\right)=z$ gives
\begin{align*}
J&\le C
4^{\alpha+aj+1/2}\exp\left(-\frac{(r-s)^2}{4\xi}-\frac{\xi(r+s)^2}{4}\right)\int_0^{1/2}
\exp\left(-rsu\left(\frac1{\xi}-\xi\right)\right)u^{aj-1}\, du\\&
\le C
4^{\alpha+aj+1/2}\exp\left(-\frac{(r-s)^2}{4\xi}-\frac{\xi(r+s)^2}{4}\right)(rs)^{-aj}
\left(\frac{\xi}{1-\xi^2}\right)^{aj}\Gamma(aj)\\&
\le C
4^{\alpha+aj+1/2}\exp\left(-\frac{(r-s)^2}{8\xi}\right)|r^2-s^2|^{-2\a-1}(rs)^{-aj}
\left(\frac{\xi}{1-\xi^2}\right)^{aj}\Gamma(aj),
\end{align*}
where in the last step we have used the estimate
\begin{align*}
\exp\left(-\frac{(r-s)^2}{4\xi}-\frac{\xi(r+s)^2}{4}\right)&\le C \exp\left(-\frac{(r-s)^2}{8\xi}\right)\exp(-c|r^2-s^2|)\\&\le C
\exp\left(-\frac{(r-s)^2}{8\xi}\right)|r^2-s^2|^{-2\a-1}.
\end{align*}
The result follows from \eqref{eq:measure Pi},
\eqref{eq:kernel tras cambio Meda} and \eqref{eq:relacion heat
kernels}.
\end{proof}

\begin{proof}
[Proof of Proposition \ref{prop:bonto}] From the identity
$(L_{\a})^{-\s}=\mathcal{I}_{\a,\s}$ in $L^2((0,\infty),d\mu_\a)$, the
estimate in \eqref{eq:overkernel} can be deduced from
\eqref{eq:fractional operator convo}, \eqref{eq:potential kernel
convo}, \eqref{eq:relacion heat kernels}, and Proposition \ref{th:acotacion nucleo} (applied in one dimension).

In order to obtain the bound in \eqref{eq:boundoverkernel} we start
considering the case $|r-s|>r/2$. Proceeding as in the proof of
Proposition \ref{th:estimacion potential kernel} with $n=1$, we have that the
kernel $\overline{\mathcal{K}}_{\a,\s}$ can be estimated by
$e^{-c(r-s)^2}(rs)^{-\a-1/2}$, then
\begin{equation*}
r^{2\s}\int_{|r-s|>r/2}\overline{\mathcal{K}}_{\a,\s}(r,s)
\,d\mu_\a(s) \le C \int_{|r-s|>r/2} |r-s|^{2\s-\a-1/2}
e^{-c(r-s)^2} s^{\a+1/2}\, ds.
\end{equation*}
If $0<s<r/2$, we have
\begin{align*}
\int_{|r-s|>r/2} |r-s|^{2\s-\a-1/2} e^{-c(r-s)^2} s^{\a+1/2}\,
ds&\le C
 \int_{|r-s|>r/2} |r-s|^{2\s}
e^{-c(r-s)^2}\, ds\\&\le C \int_0^{\infty} z^{2\s} e^{-z^2}\, dz\le
C.
\end{align*}
When $s>3r/2$, it is verified that $|r-s|\sim s$, so that
\[
\int_{|r-s|>r/2} |r-s|^{2\s-\a-1/2} e^{-c(r-s)^2} s^{\a+1/2}\,
ds\le C
 \int_{0}^\infty s^{2\s}
e^{-cs^2}\, ds\le C.
\]
In the most delicate region $|r-s|\le r/2$ we split the integral in
$\overline{\mathcal{K}}_{\a,\s}$ into the intervals $(0,1/2)$ and
$[1/2,1)$. For the second one, by using \eqref{eq:est-log}, the integral of the kernel is
controlled by
\[
r^{2\s}e^{-cr^2}\int_{r/2}^{3r/2} e^{-c(r-s)^2}\int_{1/2}^{1}
\left(-\log\left({1-\xi^2}\right)\right)^{\s-1}(1-\xi^2)^{-1/2}\,
d\xi \, ds.
\]
This last integral is bounded because the inner integral is
smaller than a constant and $r^{2\s}e^{-cr^2}\le C$.

In the case $\xi\in (0,1/2)$, by using again \eqref{eq:est-log} and switching the order of integration we have
\begin{align*}
r^{2\s}\int_{r/2}^{3r/2}\int_{0}^{1/2}
&\xi^{\s-3/2}\exp\Big(-\frac{(r-s)^2}{4\xi}-\frac{\xi
r^2}{4}\Big)\, d\xi \, ds\\
&=r^{2\s}\int_0^{1/2}\xi^{\s-3/2}\exp\Big(-\frac{\xi
r^2}{4}\Big)\int_{r/2}^{3r/2}\exp\Big(-\frac{(r-s)^2}{4\xi}\Big)\,ds\,d\xi\\
&\le Cr^{2\s}\int_0^{1/2}\xi^{\s-1}\exp\Big(-\frac{\xi
r^2}{4}\Big)\,d\xi\le Cr^{2\s}r^{-2\s}=C.
\end{align*}
The proof is finished.
\end{proof}

\noindent\textbf{Acknowledgement.}  We are greatly indebted to Jos\'e Luis Torrea for some fruitful discussions on the transference arguments among Laguerre families. We are also very grateful to the referees for the careful reading of the manuscript, and for their many remarks that helped to improve very much the paper.



\end{document}